\documentclass[11pt,a4paper,reqno]{amsart}
\usepackage[utf8]{inputenc}
\usepackage[T1]{fontenc}
\usepackage{amsmath,amssymb,amsthm}
\usepackage{mathtools}
\usepackage{hyperref}
\usepackage{enumerate}
\usepackage{mathrsfs}
\usepackage{bbm}

\newtheorem{theorem}{Theorem}[section]
\newtheorem{lemma}[theorem]{Lemma}
\newtheorem{proposition}[theorem]{Proposition}

\theoremstyle{definition}

\newtheorem{remark}[theorem]{Remark}

\newcommand{\T}{\mathbb{T}}
\newcommand{\R}{\mathbb{R}}

\newcommand{\N}{\mathcal{N}}

\newcommand{\PP}{\mathbb{P}}

\begin{document}
\title[2D anisotropic Navier-Stokes equations on the torus]{Uniform large deviations and long-time dynamics for
       the 2D anisotropic Navier-Stokes equations on the torus}
\author{Chengfeng Sun$^{1}$}
\author{Hao Yu$^{1}$}
\author{Hui Liu$^{2}$}\thanks{Corresponding author, Hui Liu (ss\_liuhui@ujn.edu.cn)}
\maketitle
         $^{1}$School of Applied Mathematics,
         Nanjing University of Finance and Economics,
         Nanjing 210023, PR China

         $^{2}$School of Mathematical Sciences,
         University of Jinan,
         Jinan 250022, PR China
\begin{abstract}
  We study the two-dimensional stochastic Navier-Stokes equations on the torus
  with horizontal dissipation and additive noise.
  First, we prove a uniform large deviation principle for the solution
  paths in the energy space $C([0,T];H).$  The proof combines a large deviation principle for a linear
  Ornstein-Uhlenbeck process, a new Lipschitz estimate for the nonlinear map, and a contraction
  principle that avoids any exponential tightness condition. Second, we analyse the long-time dynamics.  By exhibiting a deterministic
  steady state immune to the horizontal dissipation, we show that exponential
  mixing cannot hold.
  Finally, we investigate the invariant measures of the system.  In the deterministic case,
  the infinite-dimensional kernel of the
  horizontal Laplacian contains infinitely many Dirac invariant measures.
  Under a suitably degenerate noise, however, the kernel supports no invariant
  measure at all; moreover, any invariant measure of this stochastic
  system, should it exist, must be orthogonal to this kernel.
  This sharply distinguishes the anisotropic equations on the torus both from the fully
  dissipative Navier-Stokes equations and from their counterparts on domains
  with no-slip boundaries.
\end{abstract}

\noindent\textbf{Keywords:} Stochastic Navier-Stokes equations; horizontal viscosity;
uniform large deviation principle; exponential mixing; invariant measure.\\
\noindent\textbf{MSC2020:} 35Q35, 60F10, 60H15, 37A50.
\vspace{1em}

\section{Introduction}
\label{sec:intro}

\subsection{The anisotropic Navier-Stokes model and its motivation}
We study the two-dimensional incompressible Navier-Stokes equations on the
torus $\mathbb{T}^2 = \mathbb{R}/2\pi\mathbb{Z} \times \mathbb{R}/2\pi\mathbb{Z}$ with horizontal dissipation only:
\begin{equation}\label{eq:NSintro}
  \partial_t u + (u\cdot\nabla)u - \partial_1^2 u + \nabla p = 0,\qquad
  \nabla\cdot u = 0,\qquad x\in\mathbb{T}^2 .
\end{equation}
When the fluid is subject to a small additive random perturbation of intensity
$\varepsilon>0$, the dynamics read
\begin{equation}\label{eq:SNSintro}
  du^\varepsilon + (u^\varepsilon\cdot\nabla)u^\varepsilon \,dt + \nabla p^\varepsilon \,dt
  - \partial_1^2 u^\varepsilon \,dt = \sqrt\varepsilon \, Q^{1/2} dW,\qquad
  u^\varepsilon(0)=x\in\widetilde H^{0,1}.
\end{equation}
Here $W$ is an $H$-cylindrical Wiener process on a filtered probability space,
$H=\{u\in L^2(\mathbb{T}^2;\mathbb{R}^2):\nabla\cdot u=0,\ \int_{\mathbb{T}^2}u=0\}$, and the noise
covariance $Q^{1/2}:H\to\widetilde H^{1,1}$ is a Hilbert-Schmidt operator
(to ensure sufficient spatial regularity of the stochastic forcing).  The
operator $A=-\mathbb P\partial_1^2$, where $\mathbb P$ is the Leray projector,
acts only in the horizontal direction, vertical dissipation is entirely absent.
Models with anisotropic viscosity appear naturally in geophysical fluid dynamics,
where turbulent mixing in the horizontal direction is often orders of magnitude
stronger than in the vertical one (see, e.g., \cite{CushmanRoisin, Pedlosky}).
The global well-posedness of system (\ref{eq:NSintro}) in
the anisotropic Sobolev space $\widetilde H^{0,1}$ was proved by Liang, Zhang
and Zhu~\cite{LZZ} by exploiting a delicate balance between the divergence-free condition
 and the anisotropic energy estimates.  More recently, Cao and Guo~\cite{CaoGuo2026} further
investigated this deterministic model and provided refined regularity criteria,
thereby complementing the global existence result of~\cite{LZZ}.

\subsection{Previous results on long-time behaviour and large deviations}

In a deterministic case, Dong, Wu, Xu and Zhu~\cite{DongWuXuZhu2021}
studied the anisotropic Navier-Stokes equations on the partially periodic
domain $\mathbb{T}\times\mathbb{R}$.  By splitting the velocity into a
horizontal average and an oscillatory part, and using a Poincar\'{e}-type
inequality that holds for the oscillatory component,
they proved that the $H^1$-norm of the oscillatory
part decays exponentially for small initial data. Their method relies on the specific geometry $\mathbb{T}\times\mathbb{R}$
where the oscillatory part has zero horizontal mean.
On the fully periodic torus $\mathbb{T}^2$ considered in the present paper,
this fundamental property fails.  As a result, not only does the
oscillatory part fail to decay, but we prove that no exponential
mixing is possible at all.

When a stochastic forcing is added,  on a Dirichlet (no-slip) boundary condition,
Sun, Qiu and Tang~\cite{SPL} proved that the Markov semigroup generated by a
similar anisotropic Navier-Stokes equation with additive noise admits a
\emph{unique} ergodic invariant measure.  Their argument combines sequential
weak Feller properties with an asymptotic coupling technique, and it crucially
relies on the anisotropic Poincar\'{e} inequality
$\|u\|_{L^2}\le C\|\partial_1 u\|_{L^2}$ which is valid on bounded domains with
Dirichlet boundaries (see Lemma~2.1 of~\cite{SPL}).  On the torus,
however, this inequality fails because purely vertical shear modes
$u=(f(x_2),0)$ have zero horizontal derivative but non-zero $L^2$-norm, they
constitute the infinite-dimensional kernel $\mathcal N$ of $A$.  Consequently,
the methods of~\cite{SPL} do not carry over to the periodic setting.
Very recently, Liang~\cite{Liang2026} proved uniqueness of invariant measures
for a stochastically \emph{damped} anisotropic Navier-Stokes system on
$\mathbb{R}^2$, relying on the linear damping to compensate for the absence of
a Poincar\'{e} inequality.  Since our torus model has no such damping, his
approach does not apply to our setting.

In the context of large deviations, the classical theory for SPDEs has been
developed around the weak convergence method~\cite{BD,BDM}. In
Cerrai and Paskal~\cite{CP}, by a uniform contraction principle that requires \emph{exponential tightness}
of the linear Ornstein-Uhlenbeck process in an auxiliary space, they  proved a large deviation principle for
the invariant measures of the 2D Navier-Stokes equations with full dissipation.
For the anisotropic Navier-Stokes equations with multiplicative noise, because of the lack of vertical dissipation
and hence the exponential tightness,
Chen and Zhu~\cite{ChenZhu} only established a (non-uniform) LDP in the weak topology space
$C([0,T];H^{-1})$.  Recently, Wang~\cite{Wang2023} introduced a refined uniform contraction principle
that \emph{does not require exponential tightness}.  The price to pay is that the
linear process must satisfy an LDP in the \emph{same} auxiliary space where the
nonlinear map is locally Lipschitz.  This is exactly the route we follow in the
present paper.

\subsection{Main contributions}
Our first result is a \emph{uniform} large deviation principle
for the solution of \eqref{eq:SNSintro} in the energy space $C([0,T];H)$,
uniformly over bounded or compact sets of initial data (Theorem~\ref{th:FW} and \ref{th:mainLDP}).  To achieve
this we prove:
\begin{itemize}
  \item an LDP for the linear Ornstein-Uhlenbeck process $z^\varepsilon$ in the
    auxiliary space $G = C([0,T];H)\cap L^2(0,T;\widetilde H^{1,1})$;
  \item a crucial uniform Lipschitz estimate (Proposition~\ref{prop:Lip}) for the
    nonlinear map $\mathcal F_x : G \to C([0,T];H)$ that sends the linear orbit
    to the solution of the nonlinear stochastic equation.
    The estimate relies on a new algebraic manipulation of the anisotropic Gagliardo-Nirenberg
    inequalities and the divergence-free condition.
  \item the application of Wang's uniform contraction principle, which avoids the
    need for exponential tightness.
\end{itemize}

To our knowledge, this is the first uniform LDP in the energy space for the anisotropic Navier-Stokes equations.
It requires only a smoothing covariance $Q$ with no extra damping or compactness.

We then turn to the question of long-time asymptotics.  Exponential mixing provides a convenient route
from pathwise LDPs to invariant measures.
Our second main contribution is a rigorous proof
that \emph{exponential mixing cannot hold} for the anisotropic Navier-Stokes
equations on the torus (Theorem~\ref{th:noExpMix}).  The proof exhibits a
deterministic steady state $u_0=(\sin x_2,0)$ whose $L^2$ norm never decays.
For the stochastic equation with sufficiently small noise,
this undamped mode prevents the difference of two solutions
from decaying uniformly exponentially.
Consequently, we cannot apply the stationary solution method in~\cite{LiuTangZhang}
for proving large deviation principles
of invariant measures to our model.
That method requires exponential decay of solution differences (Lemma~5.4), which our system lacks.

In fact, whether an invariant measure exists at all remains unknown. Finally, we analyse the structure of invariant measures.
In the deterministic
limit ($\varepsilon=0$), the kernel $\mathcal N$ is infinite-dimensional.
Its elements are all steady states, so they yield infinitely many Dirac invariant measures.
This sharply contrasts with the fully dissipative case, where only $\delta_0$ is invariant.
For the stochastic system with a degenerate additive noise and its range lies in $\mathcal N$,
we prove orthogonality of any invariant measure to $\mathcal N$ (Theorem~\ref{th:degen}).
Roughly speaking, the undamped vertical shear modes cannot contribute to a statistical steady state.
We further demonstrate that under this degenerate noise, the kernel
itself supports no invariant measure at all.

\subsection{Outline of the paper}
The paper is organised as follows.  Section~\ref{sec:prelim} introduces function
spaces, anisotropic inequalities and the key  properties of the
horizontal Stokes operator.  Section~\ref{sec:linear} proves the LDP for the
linear Ornstein-Uhlenbeck process in $G$.  Section~\ref{sec:nonlinear} constructs the nonlinear map,
gives the Lipschitz estimate,
and uses the contraction principle to prove the uniform pathwise LDP.
Section~\ref{sec:mixing} is devoted to the failure of exponential mixing.
Section~\ref{sec:invmeas} examines invariant measures:
deterministic comparison, a degenerate noise example,
and necessary conditions for the stochastic case.
 Section~\ref{sec:conclusion} summarises our main results, places them
within a broader conceptual framework for partially dissipative systems,
and outlines possible directions for future work.

\section{Preliminaries}
\label{sec:prelim}

\subsection{Function spaces}
\label{sec:function-spaces}

We first recall some function spaces on the two dimensional torus $\mathbb{T}^2$.
Let $\mathbb{T}^2 = \mathbb{R}/2\pi\mathbb{Z} \times \mathbb{R}/2\pi\mathbb{Z}$ be the two-dimensional torus.
For $s,s'\ge 0,$ the anisotropic Sobolev space $H^{s,s'}(\mathbb{T}^2)$ is defined as the completion of smooth functions with respect to the norm
\[
  \|u\|_{H^{s,s'}}^2 = \sum_{k\in\mathbb{Z}^2} (1+|k_1|^2)^s(1+|k_2|^2)^{s'}|\hat u_k|^2,
\]
where $\hat u_k$ are the Fourier coefficients of $u$.
The divergence-free Sobolev spaces are
\[
  \widetilde H^{s,s'} = \bigl\{ u\in H^{s,s'}(\mathbb{T}^2;\mathbb{R}^2) : \nabla\cdot u = 0 \bigr\}.
\]

For the analysis of the Navier-Stokes equations on the torus it is customary to work in the space of divergence-free vector fields with vanishing mean value.  We therefore set
\begin{equation}\label{eq:Hdef}
  H = \Bigl\{ u\in L^2(\mathbb{T}^2;\mathbb{R}^2) : \nabla\cdot u = 0,\; \int_{\mathbb{T}^2} u(x)\,dx = 0 \Bigr\}.
\end{equation}

All divergence-free Sobolev spaces that appear in the sequel inherit the zero-mean condition, i.e.\ we always understand
\[
  \widetilde H^{s,s'} = H^{s,s'}(\mathbb{T}^2;\mathbb{R}^2) \cap H.
\]

For a vector field $u$ we write $\|u\|$ for the $L^2$-norm and $\langle u,v\rangle$ for the $L^2$ inner product.
The norm in $\widetilde H^{0,1}$ is equivalent to
\[
  \|u\|_{\widetilde H^{0,1}}^2 = \|u\|^2 + \|\partial_2 u\|^2,
\]
and in $\widetilde H^{1,1}$ to
\[
  \|u\|_{\widetilde H^{1,1}}^2 = \|u\|^2 + \|\partial_1 u\|^2 + \|\partial_2 u\|^2 + \|\partial_1\partial_2 u\|^2 .
\]

The linear Stokes operator with horizontal dissipation is defined by
\[
  A = -\mathbb{P}\partial_1^2, \qquad D(A) = \widetilde H^{2,0},
\]
where $\PP$ is the Leray projector onto $H$. In the paper we use the mixed anisotropic Lebesgue spaces $L^p_h L^q_v$ (horizontal $L^p$, vertical $L^q$) with the norm
\[
  \|u\|_{L^p_h L^q_v} =
  \Bigl( \int_{\mathbb{T}_h} \bigl( \int_{\mathbb{T}_v} |u(x_1,x_2)|^q \,dx_2 \bigr)^{p/q} dx_1 \Bigr)^{1/p}.
\]
The following anisotropic Gagliardo--Nirenberg inequalities from \cite{LZZ} are crucial.
\begin{lemma}\label{lem:GN}
  There exists \(C>0\) such that for all smooth \(f:\T^2\to\R\),
  \begin{align*}
    \|f\|_{L^2_v L^\infty_h}^2 &\le C\bigl(\|f\|_{L^2}\|\partial_1 f\|_{L^2} + \|f\|_{L^2}^2\bigr),\\
    \|f\|_{L^2_h L^\infty_v}^2 &\le C\bigl(\|f\|_{L^2}\|\partial_2 f\|_{L^2} + \|f\|_{L^2}^2\bigr).
  \end{align*}
\end{lemma}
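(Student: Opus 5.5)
The plan is to reduce both inequalities to a one-dimensional Agmon-type inequality on the circle and then integrate in the remaining variable. By symmetry (swap the roles of $x_1$ and $x_2$), it suffices to prove the first inequality. For fixed $x_2\in\mathbb{T}_v$, set $g(x_1)=f(x_1,x_2)$, a smooth $2\pi$-periodic function.

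The one-dimensional estimate we want is
\[
\|g\|_{L^\infty(\mathbb{T})}^2\le C\bigl(\|g\|_{L^2(\mathbb{T})}\|g'\|_{L^2(\mathbb{T})}+\|g\|_{L^2(\mathbb{T})}^2\bigr).
\]
To prove it, let $\bar g$ be the mean of $g$, so that $|\bar g|^2\le (2\pi)^{-1}\|g\|_{L^2}^2$. The function $h=g-\bar g$ vanishes at some point $a$. Then for any $x_1$, the fundamental theorem of calculus gives
\[
h(x_1)^2=\int_a^{x_1}2h\,h'\,ds\le 2\|h\|_{L^2}\|h'\|_{L^2}\le 2\|g\|_{L^2}\|g'\|_{L^2},
\]
using $\|h\|_{L^2}\le\|g\|_{L^2}$ and $h'=g'$. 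Combining with $|g|^2\le 2|h|^2+2|\bar g|^2$ yields the 1D bound.

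Next we apply this for each $x_2$ and integrate over $x_2\in\mathbb{T}_v$:
\[
\|f\|_{L^2_vL^\infty_h}^2=\int_{\mathbb{T}_v}\sup_{x_1}|f|^2\,dx_2\le C\int_{\mathbb{T}_v}\Bigl(\|f(\cdot,x_2)\|_{L^2_h}\|\partial_1 f(\cdot,x_2)\|_{L^2_h}+\|f(\cdot,x_2)\|_{L^2_h}^2\Bigr)dx_2 .
\]
The Cauchy--Schwarz inequality in $x_2$ bounds the product term by $\|f\|_{L^2}\|\partial_1 f\|_{L^2}$. The last term is exactly $\|f\|_{L^2}^2$.

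The only mildly delicate point is the ordering of norms. Taking $L^\infty$ in the inner variable, pointwise in the outer one, is what allows the Cauchy--Schwarz step in the outer variable. Smoothness of $f$ makes every step rigorous, and the general case follows by density if needed.
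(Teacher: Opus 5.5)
Your proof is correct and complete. The paper gives no proof of this lemma (it only cites \cite{LZZ}), so your argument is a self-contained replacement for that citation. It is the standard route: the one-dimensional Agmon inequality on $\T$, obtained by subtracting the mean, using that the mean-zero part vanishes at some point $a$, and writing $h(x)^2=\int_a^x 2hh'$. You apply it on each horizontal (resp.\ vertical) fibre and then use Cauchy--Schwarz in the remaining variable, which even gives an explicit constant ($C=4$ suffices).

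Your reading of the mixed norm is the one consistent with the paper's convention. The supremum in $x_1$ is taken for each fixed $x_2$, and the $L^2$ norm in $x_2$ is taken outside. This is also the stronger form: since $\sup_{x_1}\int_{\T}|f|^2\,dx_2\le\int_{\T}\sup_{x_1}|f|^2\,dx_2$, it controls the $L^\infty_hL^2_v$ norms that are actually used in the proof of Proposition~\ref{prop:Lip}.
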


\subsection{Kernel of the horizontal Stokes operator}

We now characterize the kernel of $A$.

\begin{lemma}\label{lem:kernel}
  Define $\N = \ker A$.  Then
  \begin{enumerate}
    \item $\N = \{ u\in H : \partial_1 u = 0 \}$;
    \item $\N$ consists exactly of the vector fields
      \begin{equation}\label{eq:kernel_form}
        u(x_1,x_2) = \begin{pmatrix} f(x_2) \\ 0 \end{pmatrix},
        \qquad f\in L^2(\mathbb{T}),\quad \int_{\mathbb{T}} f(x_2)\,dx_2 = 0;
      \end{equation}
    \item $B(u) \equiv 0$ for all $u\in\N$, where $B(u)=\mathbb{P}\big((u\cdot\nabla)u\big)$.
  \end{enumerate}
\end{lemma}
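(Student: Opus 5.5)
We argue entirely in Fourier variables on $\T^2$, where $\PP$ and $\partial_1$ are both Fourier multipliers and therefore commute. For $u\in H$ we write $u=\sum_{k\neq 0}\hat u_k e^{ik\cdot x}$ with $k\cdot\hat u_k=0$; the Leray projector acts as $\widehat{(\PP v)}_k=\hat v_k-\frac{k(k\cdot\hat v_k)}{|k|^2}$ and fixes divergence-free fields. The three parts then follow in order.

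\textbf{Part (1).} For $u\in D(A)\subset H$, the field $\partial_1^2u$ is again divergence-free with zero mean, so $\PP\partial_1^2u=\partial_1^2u$ and $Au=-\partial_1^2u$. This gives $\langle Au,u\rangle=\|\partial_1u\|^2$. Hence $Au=0$ forces $\partial_1u=0$. Conversely, if $u\in H$ satisfies $\partial_1u=0$, then $\hat u_k=0$ whenever $k_1\neq0$. So $u\in\widetilde H^{2,0}=D(A)$ and $Au=0$. On the Fourier side the statement is simply: $u\in\N$ if and only if $\hat u_k$ is supported on $\{k_1=0\}$.

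\textbf{Part (2).} Take $u\in\N$. By part (1), $u=u(x_2)$ depends on $x_2$ alone. The divergence condition then reduces to $\partial_2u_2=0$, so $u_2$ is constant, and the zero-mean condition forces $u_2=0$. Writing $u_1=f(x_2)$, the zero-mean condition becomes $\int_\T f=0$ (up to the factor $2\pi$). Membership $u\in L^2(\T^2)$ is equivalent to $f\in L^2(\T)$. Conversely, every field of the form \eqref{eq:kernel_form} is divergence-free because $\partial_1f(x_2)=0$. It has zero mean and satisfies $\partial_1u=0$, so it lies in $\N$. The only point needing care is to justify that $\partial_2u_2=0$ implies $u_2$ is constant when $u$ is merely in $L^2$. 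We handle this through Fourier coefficients: $k_2\hat u_{2,k}=0$ with $k_1=0$ and $k\neq0$ gives $\hat u_{2,k}=0$.

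\textbf{Part (3).} For $u=(f(x_2),0)$ we compute directly
\[
(u\cdot\nabla)u=u_1\partial_1u+u_2\partial_2u=f(x_2)\,\partial_1\!\begin{pmatrix} f(x_2)\\0\end{pmatrix}+0=0 ,
\]
so $B(u)=\PP 0=0$. For merely $L^2$ data $f$ we interpret $B(u)$ in the usual weak or distributional sense, $(u\cdot\nabla)u=\nabla\cdot(u\otimes u)$. The vector field $\nabla\cdot(u\otimes u)$ has first component $\partial_1(f^2)+\partial_2(f\cdot0)=0$ and second component $0$. This again gives $B(u)=0$, and density arguments are unnecessary.

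The main potential obstacle is the regularity bookkeeping in part (3): for general $f\in L^2$, $u\otimes u$ is only in $L^1$. We avoid this by using the divergence form, where each entry of $u\otimes u$ is either $0$ or the function $f^2(x_2)$, which is independent of $x_1$, so the relevant derivatives vanish identically in distributions.
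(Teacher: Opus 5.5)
Your proof is correct. For parts (2) and (3) you follow the paper's argument: $u$ is independent of $x_1$, the divergence condition forces $u_2$ to be constant and hence zero, and $f(x_2)\,\partial_1 f(x_2)=0$. You also add two justifications the paper omits: a Fourier argument that $\partial_2u_2=0$ implies $u_2=0$, and a distributional reading $B(u)=\PP\,\nabla\cdot(u\otimes u)$ for profiles $f$ that are merely in $L^2(\T)$.

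Part (1) is where you genuinely differ. The paper writes $\PP\partial_1^2u=0$ as $\partial_1^2u=\nabla q$ and takes the divergence to get $\Delta q=0$, so $q$ is constant. It then argues, for a.e.\ fixed $x_2$, that a periodic function with $\partial_1^2u=0$ is affine in $x_1$ and hence constant in $x_1$. You instead use that $\PP$ and $\partial_1$ are commuting Fourier multipliers, so $\PP\partial_1^2u=\partial_1^2u$ on $D(A)$. You then conclude from the energy identity $\langle Au,u\rangle=\|\partial_1u\|^2$. The paper's harmonic-pressure step amounts to proving that same identity $\PP\partial_1^2u=\partial_1^2u$, and your energy identity replaces its affine-plus-periodicity argument.

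Your route buys two things. It avoids the regularity the paper implicitly needs to integrate $q\,\Delta q$ by parts and to treat $u(\cdot,x_2)$ as a $C^1$ function. It also checks that $\{u\in H:\partial_1u=0\}\subset\widetilde H^{2,0}=D(A)$, which the paper takes for granted in the converse direction. The paper's physical-space argument, for its part, is the template it reuses in Remark~\ref{rmk:kernel_comparison} for other boundary conditions. Note, though, that your energy identity needs only that $\PP$ is self-adjoint with $\PP u=u$, not the commutation, so it transfers to those settings just as well.
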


\begin{proof}
  \textbf{1.  Characterisation of the kernel.}\;
  Suppose $u\in\N$, i.e.\ $u\in H$ and $A u = 0$.
  By definition of the Leray projector, $\mathbb{P}(\partial_1^2 u) = 0$ means that $\partial_1^2 u$ is a gradient field,
  there exists a scalar function $q$ such that
  \begin{equation}\label{eq:grad_pr}
    \partial_1^2 u = \nabla q \qquad\text{a.e.\ on }\mathbb{T}^2.
  \end{equation}
  Taking the divergence of \eqref{eq:grad_pr} and using $\nabla\cdot u = 0$ gives
  \[
    \Delta q = \partial_1(\partial_1^2 u_1) + \partial_2(\partial_1^2 u_2)
    = \partial_1^2(\partial_1 u_1 + \partial_2 u_2) = 0 .
  \]
   By multiplying $\Delta q=0$ by $q$ and integrate by parts
  to obtain $\int_{\mathbb{T}^2} |\nabla q|^2 dx = 0$,  then the harmonic functions $q$ are constants.
 Eq. \eqref{eq:grad_pr} reduces to $\partial_1^2 u = 0$.

  Now, for a.e.\ fixed $x_2$, $u(\cdot,x_2)$ is a $C^1$ vector-valued function of $x_1$ satisfying
  $\partial_1^2 u = 0$; hence it is an affine function in $x_1$:
  \[
    u(x_1,x_2) = a(x_2) + x_1 b(x_2)
  \]
  for some vector fields $a,b$.  Periodicity in $x_1$ forces $u(0,x_2)=u(2\pi,x_2)$, which yields
  $b(x_2) \equiv 0$.  Therefore $u$ does not depend on $x_1$, i.e.\ $\partial_1 u = 0$.
  This proves the first claim.

  Conversely, if $\partial_1 u = 0$ in $H$, then $\partial_1^2 u = 0$ and $A u = 0$, so $u\in\N$.
  Hence $\N = \{ u\in H : \partial_1 u = 0\}$.

  \textbf{2.  Explicit description of $\N$.}\;
  Let $u\in\N$.  Because $\partial_1 u = 0$, we can write
  \[
    u_1(x_1,x_2) = a(x_2),\qquad u_2(x_1,x_2) = b(x_2)
  \]
  for some $a,b\in L^2(\mathbb{T})$.  The incompressibility condition
  $\partial_1 u_1 + \partial_2 u_2 = 0$ gives $b'(x_2) = 0$, so $b$ is constant.
  Since $u$ belongs to $H$, it has zero mean, which forces
  \[
    0 = \int_{\mathbb{T}^2} u_2 \,dx = (2\pi)^2 b \;\Longrightarrow\; b = 0 .
  \]
  Hence $u_2 \equiv 0$.  The zero-mean condition on $u_1$ becomes $\int_{\mathbb{T}} a(x_2)\,dx_2 = 0$.
  We obtain the form \eqref{eq:kernel_form}.
  The space is infinite-dimensional because, for instance, the functions
  \[
    e_k(x_1,x_2) = \frac{1}{\pi}\begin{pmatrix} \sin(k x_2) \\ 0 \end{pmatrix},\quad k=1,2,\dots,
  \]
  belong to $\N$ and are orthonormal in $L^2$.

  \textbf{3.  Vanishing of the nonlinear term.}\;
  Take $u\in\N$ given by \eqref{eq:kernel_form}.  Then
  \[
    u\cdot\nabla = f(x_2)\partial_1 + 0\cdot\partial_2 = f(x_2)\partial_1 .
  \]
  Applying this operator to $u$ yields
  \[
    (u\cdot\nabla)u = f(x_2)\partial_1 \begin{pmatrix} f(x_2) \\ 0 \end{pmatrix}
    = \begin{pmatrix} f(x_2)\,\partial_1 f(x_2) \\ 0 \end{pmatrix}
    = \begin{pmatrix} 0 \\ 0 \end{pmatrix},
  \]
  because $f(x_2)$ does not depend on $x_1$.  Hence $(u\cdot\nabla)u = 0$, and
  $B(u) = \mathbb{P}(0) = 0$.
\end{proof}

\begin{remark}[Comparison of kernels in different settings]\label{rmk:kernel_comparison}
  It is instructive to compare the kernel of the Stokes operator under three different
  boundary conditions or dissipation regimes.
  \begin{enumerate}
   \item \textbf{Full dissipation on $\mathbb{T}^2$.}
  If $A_{\mathrm{full}} = -\PP\Delta$, then $A_{\mathrm{full}} u = 0$ implies $\Delta u = \nabla q$.
  Taking the divergence gives $\Delta q = 0$, so $q$ is constant on the torus and $\Delta u = 0$.
  Multiplying $\Delta u = 0$ by $u$, integrating by parts over $\mathbb{T}^2$ and using the
  periodic boundary conditions yields
  \[
    0 = \int_{\mathbb{T}^2} u \cdot \Delta u \,dx = -\int_{\mathbb{T}^2} |\nabla u|^2 \,dx .
  \]
  Hence $\nabla u \equiv 0$, which forces $u$ to be a constant vector.  The zero-mean condition
  $\int_{\mathbb{T}^2} u\,dx = 0$ then gives $u \equiv 0$.  Thus $\ker A_{\mathrm{full}} = \{0\}$.
    \item \textbf{Horizontal dissipation with Dirichlet (no-flux) boundary condition.}
      On a bounded domain $\Omega=[0,L_x]\times[0,L_y]$ with $u=0$ on $\partial\Omega$,
      the equation $-\mathbb{P}\partial_1^2 u = 0$ together with the boundary condition forces
      $u\equiv 0$.  Indeed, $\partial_1^2 u = 0$ implies $u$ is affine in $x_1$:
      $u(x_1,x_2) = a(x_2) + x_1\,b(x_2)$.  The boundary condition $u(0,x_2)=u(L_x,x_2)=0$
      gives $a\equiv b\equiv 0$.  Hence $\ker A_{\mathrm{Dir}} = \{0\}$ as well.
    \item \textbf{Horizontal dissipation on $\mathbb{T}^2$ (the present setting).}
      In the periodic case, $\partial_1^2 u = 0$ only forces $u$ to be independent of $x_1$,
      but imposes no restriction on the $x_2$-dependence.  Divergence-free and zero-mean
      conditions then lead to the infinite-dimensional kernel described in Lemma~\ref{lem:kernel}.
  \end{enumerate}
  The essential difference is that the full Laplacian provides dissipation in \emph{all} spatial
  directions, while the horizontal Laplacian leaves purely vertical variations untouched.
  Whether these variations survive as admissible steady states depends on the boundary condition:
  Dirichlet side-walls kill them via the no-slip requirement, whereas periodic boundary conditions
  allow them to persist.  This observation explains why the anisotropic Navier-Stokes equations on
  the torus possess infinitely many steady deterministic states and why exponential mixing cannot
  hold (see Section~\ref{sec:mixing}).
\end{remark}

\subsection{Stochastic setting}\label{sec:stochastic}

Let $(\Omega,\mathcal{F},\{\mathcal{F}_t\}_{t\geq0},\mathbb{P})$ be a complete filtered probability
space.
Let $H$ be the Hilbert space defined in \eqref{eq:Hdef}. A \emph{cylindrical Wiener process} $W$ on $H$ is a family of continuous linear maps
$W(t):H\to L^2(\Omega)$ such that for every $h\in H$, $\{W(t)h\}_{t\ge0}$ is a real-valued
$\mathcal{F}_t$-Brownian motion with variance $t\|h\|_H^2$.
There exists a separable Hilbert space $U$ such that the embedding $H\hookrightarrow U$
is Hilbert-Schmidt, and $W$ can be identified with a $U$-valued Wiener process whose
covariance operator is determined by this embedding (see \cite{DPZ,LR}).
In particular, the trajectories of $W$ belong to $C([0,T];U)$ almost surely.

The stochastic perturbation in \eqref{eq:SNSintro} is given by $Q^{1/2}dW$, where
$Q:H\to H$ is a non-negative, symmetric, trace-class operator.
We assume the following regularity:
\begin{equation}\label{eq:Qreg}
  Q^{1/2}: H\longrightarrow \widetilde H^{1,1}\quad\text{is Hilbert-Schmidt}.
\end{equation}
Condition \eqref{eq:Qreg} guarantees that the linear Ornstein-Uhlenbeck process
(see Section~\ref{sec:linear}) possesses enough regularity to make the compactness
arguments work.

\section{Linear LDP in \(G\)}\label{sec:linear}

Define the auxiliary space
\begin{align*}
  &G := C([0,T];L^2)\cap L^2(0,T;\widetilde H^{1,1}),\\
  &\|z\|_{G} = \sup_{0\le t\le T}\|z(t)\|_{L^2} + \biggl(\int_0^T \|z(t)\|_{\widetilde H^{1,1}}^2\,dt\biggr)^{1/2}.
\end{align*}

Consider the linear stochastic Stokes equation
\begin{equation}\label{eq:lin}
  dz^\varepsilon + A z^\varepsilon dt = \sqrt\varepsilon\, Q^{1/2} dW,\qquad z^\varepsilon(0)=0 .
\end{equation}
Because the solution map is measurable, there exists \(\mathcal G^\varepsilon: C([0,T];U)\to G\) such that \(z^\varepsilon = \mathcal G^\varepsilon(W)\) a.s.

For \(\varphi\in L^2(0,T;H)\), the skeleton equation is
\begin{equation}\label{eq:ske}
  \partial_t z^\varphi + A z^\varphi = Q^{1/2}\varphi,\qquad z^\varphi(0)=0 .
\end{equation}
Define \(\mathcal G^0(\int_0^\cdot \varphi) = z^\varphi\).

\begin{lemma}[A priori estimates for the skeleton equation]\label{lem:energy}
  Let $z^\varphi$ be the solution of \eqref{eq:ske} with $\varphi\in L^2(0,T;H)$.
  Under the assumption that $Q^{1/2}:H\to\widetilde H^{1,1}$ is a bounded linear operator,
  there exists a constant $C=C(T,Q)>0$ such that
  \[
    \|z^\varphi\|_{G}^2
    = \sup_{0\le t\le T}\|z^\varphi(t)\|_{L^2}^2
      + \int_0^T \|z^\varphi(t)\|_{\widetilde H^{1,1}}^2\,dt
    \le C \int_0^T \|\varphi(t)\|_{H}^2\,dt .
  \]
\end{lemma}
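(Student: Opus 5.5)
The plan is a two-level energy estimate for \eqref{eq:ske}: one in $L^2$ and one for $\partial_2 z^\varphi$. The horizontal dissipation controls only $\partial_1$-derivatives, and it is the $\widetilde H^{1,1}$-regularity of the forcing $Q^{1/2}\varphi$ that supplies the vertical control. First I will note that on $\mathbb{T}^2$ the operator $\partial_1^2$ is a Fourier multiplier, so it commutes with $\PP$ and with $\partial_2$. Hence for divergence-free, zero-mean $z$ we have $Az=-\partial_1^2 z$,
\[
\langle Az,z\rangle=\|\partial_1 z\|^2,\qquad \langle Az,-\partial_2^2 z\rangle=\|\partial_1\partial_2 z\|^2 .
\]
To justify these pairings I will work mode by mode, or with a Fourier--Galerkin truncation. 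Each Fourier coefficient satisfies the scalar ODE $\tfrac{d}{dt}\hat z_k=-k_1^2\hat z_k+\widehat{(Q^{1/2}\varphi)}_k$, so $z^\varphi$ exists, is unique and is explicit. The estimates below hold uniformly in the truncation and pass to the limit by weak lower semicontinuity, with continuity in time in $L^2$ coming from the mode-wise formula.

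\textbf{Step 1 ($L^2$ level).} I will test \eqref{eq:ske} with $z=z^\varphi$ to get
\[
\tfrac12\tfrac{d}{dt}\|z\|^2+\|\partial_1 z\|^2=\langle Q^{1/2}\varphi,z\rangle\le \tfrac12\|Q^{1/2}\varphi\|^2+\tfrac12\|z\|^2 .
\]
Gronwall's inequality together with $z(0)=0$ then gives
\[
\sup_{t\le T}\|z(t)\|^2+\int_0^T\|\partial_1 z\|^2\,dt\le e^{T}\|Q^{1/2}\|^2_{\mathcal L(H,L^2)}\int_0^T\|\varphi\|_H^2\,dt .
\]

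\textbf{Step 2 (vertical level).} I will apply $\partial_2$ to the equation, which is legitimate because $\partial_2$ commutes with $A$. Testing with $\partial_2 z$ gives
\[
\tfrac12\tfrac{d}{dt}\|\partial_2 z\|^2+\|\partial_1\partial_2 z\|^2=\langle \partial_2 Q^{1/2}\varphi,\partial_2 z\rangle\le \tfrac12\|Q^{1/2}\varphi\|_{\widetilde H^{1,1}}^2+\tfrac12\|\partial_2 z\|^2 .
\]
Gronwall's inequality again bounds $\sup_t\|\partial_2 z\|^2+\int_0^T\|\partial_1\partial_2 z\|^2$ by $e^T\|Q^{1/2}\|^2_{\mathcal L(H,\widetilde H^{1,1})}\int_0^T\|\varphi\|_H^2$.

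\textbf{Step 3 (assembling the $G$-norm).} The four pieces of $\int_0^T\|z\|^2_{\widetilde H^{1,1}}$ are handled as follows. The terms $\int\|\partial_1 z\|^2$ and $\int\|\partial_1\partial_2 z\|^2$ come directly from the dissipation in Steps 1 and 2. The terms $\int\|z\|^2$ and $\int\|\partial_2 z\|^2$ are bounded by $T$ times the corresponding suprema. Adding everything yields the claim with $C=C(T,Q)$ of order $(1+T)e^{T}\|Q^{1/2}\|^2_{\mathcal L(H,\widetilde H^{1,1})}$.

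The main obstacle is the absence of vertical dissipation, which rules out any Poincar\'e inequality, since the kernel $\N$ of Lemma~\ref{lem:kernel} is invisible to $A$. The $L^2(0,T)$-integrability of $\|z\|^2$ and $\|\partial_2 z\|^2$ therefore cannot come from the operator. This is why I use the crude Gronwall and $\sup\times T$ bounds, at the cost of an exponential-in-$T$ constant, instead of coercivity. The other point needing care is the commutation $\PP\partial_1^2=\partial_1^2\PP$ and $[\partial_2,A]=0$; it is exactly the periodic setting that makes this trivial via Fourier series.
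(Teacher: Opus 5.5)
Your proposal is correct and follows the paper's proof essentially step for step. Both use an $L^2$ energy estimate and an energy estimate for $\partial_2 z^\varphi$, each closed with Young's and Gronwall's inequalities, with $\int_0^T\|z\|^2$ and $\int_0^T\|\partial_2 z\|^2$ controlled by $T$ times the suprema; your Fourier-mode argument for $Az=-\partial_1^2 z$ and $[\partial_2,A]=0$ makes explicit the ``standard Galerkin approximation'' the paper invokes. The only slip is a harmless constant: your integrating-factor step bounds $\sup_t\|z\|^2$ and $\int_0^T\|\partial_1 z\|^2$ separately by $e^T$ and $\tfrac12 e^T$ times $\|Q^{1/2}\|^2\int_0^T\|\varphi\|_H^2$, so their sum carries $\tfrac32 e^T$ rather than $e^T$, which does not affect the lemma.
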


\begin{proof}
  The estimates can be justified by a standard Galerkin approximation. We split the proof into two steps.

  \medskip
  \noindent\textit{Step 1.~$L^2$ energy estimate.}
  Taking the $L^2$ inner product of \eqref{eq:ske} with $z^\varphi$ and using
  $\langle\mathbb{P}\partial_1^2 z^\varphi, z^\varphi\rangle = -\|\partial_1 z^\varphi\|_{L^2}^2$,
  we obtain
  \begin{equation}\label{eq:linL2}
    \frac12\frac{d}{dt}\|z^\varphi\|_{L^2}^2 + \|\partial_1 z^\varphi\|_{L^2}^2
    = \langle Q^{1/2}\varphi, z^\varphi\rangle .
  \end{equation}
  The right-hand side is bounded by
  \[
    |\langle Q^{1/2}\varphi, z^\varphi\rangle|
    \le \|Q^{1/2}\varphi\|_{L^2} \|z^\varphi\|_{L^2}
    \le \frac12 \|z^\varphi\|_{L^2}^2 + \frac12 \|Q^{1/2}\varphi\|_{L^2}^2 .
  \]
  Because $Q^{1/2}:H\to H$ is bounded (it is even Hilbert-Schmidt), we have
  $\|Q^{1/2}\varphi\|_{L^2} \le C_Q \|\varphi\|_{H}$.  Hence
  \[
    \frac{d}{dt}\|z^\varphi\|_{L^2}^2 + 2\|\partial_1 z^\varphi\|_{L^2}^2
    \le \|z^\varphi\|_{L^2}^2 + C \|\varphi\|_{H}^2 .
  \]
  Dropping the nonnegative term $2\|\partial_1 z^\varphi\|_{L^2}^2$ and applying Gronwall's
  inequality gives, for all $t\in[0,T]$,
  \begin{equation}\label{eq:L2bound}
    \|z^\varphi(t)\|_{L^2}^2
    \le \int_0^t e^{t-s} C \|\varphi(s)\|_{H}^2\,ds
    \le C(T) \int_0^T \|\varphi(s)\|_{H}^2\,ds .
  \end{equation}
  Integrating \eqref{eq:linL2} in time and using \eqref{eq:L2bound} yields
  \begin{equation}\label{eq:L2diss}
    \int_0^T \|\partial_1 z^\varphi(t)\|_{L^2}^2\,dt
    \le C(T) \int_0^T \|\varphi(t)\|_{H}^2\,dt .
  \end{equation}

  \medskip
  \noindent\textit{Step 2.~Estimate of the vertical derivative.}
  Apply $\partial_2$ to \eqref{eq:ske}, we obtain
  \[
    \partial_t \partial_2 z^\varphi + A \partial_2 z^\varphi = \partial_2 Q^{1/2}\varphi .
  \]
  Taking the $L^2$ inner product with $\partial_2 z^\varphi$ gives
  \begin{equation}\label{eq:linH1}
    \frac12\frac{d}{dt}\|\partial_2 z^\varphi\|_{L^2}^2 + \|\partial_1\partial_2 z^\varphi\|_{L^2}^2
    = \langle \partial_2 Q^{1/2}\varphi, \partial_2 z^\varphi\rangle .
  \end{equation}
  The right-hand side is estimated by
  \[
    |\langle \partial_2 Q^{1/2}\varphi, \partial_2 z^\varphi\rangle|
    \le \|\partial_2 Q^{1/2}\varphi\|_{L^2} \|\partial_2 z^\varphi\|_{L^2}
    \le \frac12 \|\partial_2 z^\varphi\|_{L^2}^2 + \frac12 \|\partial_2 Q^{1/2}\varphi\|_{L^2}^2 .
  \]
  By hypothesis $Q^{1/2}:H\to\widetilde H^{1,1}$ is bounded, so $\|\partial_2 Q^{1/2}\varphi\|_{L^2}
  \le \|Q^{1/2}\varphi\|_{\widetilde H^{1,1}} \le C \|\varphi\|_{H}$.  Thus
  \[
    \frac{d}{dt}\|\partial_2 z^\varphi\|_{L^2}^2 + 2\|\partial_1\partial_2 z^\varphi\|_{L^2}^2
    \le \|\partial_2 z^\varphi\|_{L^2}^2 + C \|\varphi\|_{H}^2 .
  \]
  As before, Gronwall's inequality yields
  \[
    \sup_{0\le t\le T}\|\partial_2 z^\varphi(t)\|_{L^2}^2
    \le C(T) \int_0^T \|\varphi(t)\|_{H}^2\,dt ,
  \]
  and integrating \eqref{eq:linH1} over $[0,T]$ gives
  \begin{equation}\label{eq:vertical}
    \int_0^T \|\partial_1\partial_2 z^\varphi(t)\|_{L^2}^2\,dt
    \le C(T) \int_0^T \|\varphi(t)\|_{H}^2\,dt .
  \end{equation}

  The $\widetilde H^{1,1}$ norm satisfies
  \[
    \|z^\varphi\|_{\widetilde H^{1,1}}^2
    = \|z^\varphi\|_{L^2}^2 + \|\partial_1 z^\varphi\|_{L^2}^2
      + \|\partial_2 z^\varphi\|_{L^2}^2 + \|\partial_1\partial_2 z^\varphi\|_{L^2}^2 .
  \]
  Combining \eqref{eq:L2bound}, \eqref{eq:L2diss},
  the estimate for $\|\partial_2 z^\varphi\|_{L^2}$ and \eqref{eq:vertical}, we obtain
  \[
    \sup_{0\le t\le T}\|z^\varphi(t)\|_{L^2}^2 + \int_0^T \|z^\varphi(t)\|_{\widetilde H^{1,1}}^2\,dt
    \le C(T,Q) \int_0^T \|\varphi(t)\|_{H}^2\,dt .
  \]
  This is precisely the desired bound in the $G$-norm.
\end{proof}

\begin{lemma}[Compactness of the skeleton set]\label{lem:compact}
  For every $N<\infty$, the set
  \[
    K_N = \Bigl\{\mathcal G^0\Bigl(\int_0^\cdot \varphi(s)\,ds\Bigr) :
      \varphi\in L^2(0,T;H),\;\int_0^T \|\varphi(t)\|_H^2\,dt \le N\Bigr\}
  \]
  is a compact subset of $G = C([0,T];L^2)\cap L^2(0,T;\widetilde H^{1,1})$.
\end{lemma}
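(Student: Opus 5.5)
The plan is to show that the solution map of the skeleton equation is a \emph{compact linear operator} $\Gamma: L^2(0,T;H)\to G$, $\Gamma\varphi = z^\varphi$. Then $K_N$ is the image under $\Gamma$ of the closed ball $B_N=\{\int_0^T\|\varphi\|_H^2\,dt\le N\}$, and compactness of $K_N$ follows at once. Indeed, $B_N$ is weakly compact and metrizable in the weak topology, since $L^2(0,T;H)$ is a separable Hilbert space. A compact linear operator maps weakly convergent sequences to strongly convergent ones. Hence every sequence $z^{\varphi_n}$ in $K_N$ has a subsequence converging in $G$ to some $z^{\varphi}$ with $\varphi\in B_N$. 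Linearity of \eqref{eq:ske} and zero initial data make $\Gamma$ linear. Lemma~\ref{lem:energy} gives $\|\Gamma\varphi\|_G\le C\|Q^{1/2}\|_{L(H,\widetilde H^{1,1})}\|\varphi\|_{L^2(0,T;H)}$; tracking the proof shows the constant is linear in the operator norm of the forcing operator.

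\textbf{Step 1: finite-rank approximation of the forcing.} Let $\{e_j\}$ be an orthonormal basis of $H$ and $P_m$ the projection onto $\mathrm{span}\{e_1,\dots,e_m\}$. Since $Q^{1/2}:H\to\widetilde H^{1,1}$ is Hilbert--Schmidt by \eqref{eq:Qreg}, we have $\|Q^{1/2}-Q^{1/2}P_m\|_{L(H,\widetilde H^{1,1})}\le \bigl(\sum_{j>m}\|Q^{1/2}e_j\|_{\widetilde H^{1,1}}^2\bigr)^{1/2}\to0$. Further, each $g_j=Q^{1/2}e_j\in\widetilde H^{1,1}$ is replaced by a truncation $g_j^{(M)}$ to Fourier modes $|k|\le M$, which is still divergence free and mean zero. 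This gives a finite-rank operator $R_{m,M}\varphi=\sum_{j\le m}\langle\varphi,e_j\rangle g_j^{(M)}$ with $\|Q^{1/2}-R_{m,M}\|_{L(H,\widetilde H^{1,1})}$ arbitrarily small. Applying Lemma~\ref{lem:energy} with $R_{m,M}$ in place of $Q^{1/2}$ shows that the associated solution operators $\Gamma_{m,M}$ satisfy $\|\Gamma-\Gamma_{m,M}\|_{L(L^2(0,T;H),G)}\to0$. Since norm limits of compact operators are compact, it suffices to prove each $\Gamma_{m,M}$ is compact.

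\textbf{Step 2: compactness of $\Gamma_{m,M}$.} On the torus the Leray projector is a Fourier multiplier commuting with $\partial_1^2$, so on $H$ we have $A=-\partial_1^2$, acting on the mode $k$ as multiplication by $k_1^2$. The forcing $R_{m,M}\varphi(t)$ lives in the finite-dimensional space $V_M$ of trigonometric fields with $|k|\le M$, and $V_M$ is invariant under $e^{-tA}$. Therefore
\[
\Gamma_{m,M}\varphi(t)=\sum_{j\le m}\sum_{|k|\le M}\hat g^{(M)}_{j,k}\,e^{ik\cdot x}\int_0^t e^{-k_1^2(t-s)}\varphi_j(s)\,ds,\qquad \varphi_j=\langle\varphi,e_j\rangle .
\]
So $\Gamma_{m,M}$ is a finite sum of maps of the form (scalar Volterra operator) $\otimes$ (fixed vector in $V_M$). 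All norms on $V_M$ are equivalent, and the $G$-norm on $V_M$-valued paths is controlled by the $C([0,T])$ norm of the scalar coefficients. It is therefore enough to know that $\phi\mapsto\int_0^t e^{-\lambda(t-s)}\phi(s)\,ds$ is compact from $L^2(0,T)$ into $C([0,T])$. This holds by Arzel\`a--Ascoli: the images of a bounded set are uniformly bounded and uniformly $\tfrac12$-H\"older, by Cauchy--Schwarz.

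\textbf{Main obstacle.} The delicate point is that $G$ contains the strong norm $L^2(0,T;\widetilde H^{1,1})$, so no Aubin--Lions argument with extra regularity is available. The equation has no vertical smoothing, and $z^\varphi$ is only as regular as $Q^{1/2}\varphi$. Weak convergence $\varphi_n\rightharpoonup\varphi$ together with compactness of $Q^{1/2}$ alone does not give strong convergence in time, because of time oscillations. The approximation in Step 1, which uses the Hilbert--Schmidt property in the $\widetilde H^{1,1}$ norm and relies on the uniform linear bound of Lemma~\ref{lem:energy}, is exactly what circumvents this. I expect the main care to go into verifying that the constant in Lemma~\ref{lem:energy} depends only on $T$ and the operator norm of the forcing operator, so that it applies uniformly to the differences $Q^{1/2}-R_{m,M}$.
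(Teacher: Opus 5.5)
Your proof is correct, but it takes a different route from the paper's.

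\textbf{The paper's route.} The paper argues sequence by sequence. Given $\varphi_n\rightharpoonup\varphi$ in $S_N$, it introduces the primitive $\Phi_n(t)=\int_0^t Q^{1/2}(\varphi_n-\varphi)\,ds$. It shows by Arzel\`a--Ascoli that $\Phi_n\to0$ in $C([0,T];\widetilde H^{1,1})$, using compactness of $Q^{1/2}$ at each fixed $t$ together with uniform $\tfrac12$-H\"older continuity. It then integrates by parts in time in the energy identity for $w_n=z^{\varphi_n}-z^\varphi$, so that every forcing term is expressed through $\Phi_n$ and vanishes in the limit. This argument is carried out once at the $L^2$ level and repeated for $\partial_2 w_n$.

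\textbf{Your route.} You show that the solution map $\Gamma$ is a compact operator $L^2(0,T;H)\to G$. You approximate $Q^{1/2}$ in $L(H,\widetilde H^{1,1})$ by Fourier-truncated finite-rank operators. You use Lemma~\ref{lem:energy} as a stability bound; its proof indeed gives the constant $C(T)\|B\|_{L(H,\widetilde H^{1,1})}^2$ for an arbitrary bounded forcing operator $B$. The approximants are explicit finite sums of scalar Volterra operators, because $A=-\partial_1^2$ is a Fourier multiplier on $H$.

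\textbf{Same underlying mechanism.} In both proofs, integration in time converts weak convergence into uniform convergence, and compactness of $Q^{1/2}:H\to\widetilde H^{1,1}$ handles the spatial variable. The ``time oscillation'' obstacle you describe is real for the forcing $Q^{1/2}\varphi_n$ but not for the solutions. The paper's primitive $\Phi_n$ absorbs it in exactly the way your scalar Volterra operators do.

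\textbf{What each buys.} Your version puts all of the PDE content into Lemma~\ref{lem:energy} and reduces the rest to standard functional analysis: norm limits of compact operators, and images of weakly compact balls. It therefore avoids redoing the integration-by-parts estimate at the $\partial_2$ level. It also yields the weak-to-strong continuity used in Step~3 of Lemma~\ref{lem:conv} directly as a property of a compact operator. The price is that you need an explicit diagonalisation of $A$ that is compatible with truncation in the $\widetilde H^{1,1}$ norm, which is available on the torus. The paper's energy argument uses only the symmetry of $A$ and the commutation of $\partial_2$ with $A$.

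\textbf{Two cosmetic points.} The Volterra images are Lipschitz plus $\tfrac12$-H\"older rather than merely $\tfrac12$-H\"older, since the factor $e^{-\lambda(t-s)}$ contributes a term of size $\lambda|t-t'|$. The modes $\pm k$ should be paired to keep the fields real. Neither point affects the argument.
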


\begin{proof}
  To prove compactness of $K_N$, it suffices to show that for any sequence
  $\{\varphi_n\}\subset S_N:=\{\varphi:\int_0^T\|\varphi\|_H^2\le N\}$, there exists a subsequence
  (still denoted by $\varphi_n$) and a limit $\varphi\in S_N$ such that
  \[
    z^{\varphi_n}\longrightarrow z^\varphi \quad\text{strongly in } G.
  \]
  Since $S_N$ is weakly compact in the Hilbert space $L^2(0,T;H)$,
  we may extract a subsequence with
  \[
    \varphi_n \rightharpoonup \varphi \quad\text{weakly in } L^2(0,T;H).
  \]

  \medskip
  Define the operator $\Phi: L^2(0,T;H) \to C([0,T];\widetilde H^{1,1})$ by
  \[
    \Phi(v)(t) = \int_0^t Q^{1/2}v(s)\,ds,\qquad v\in L^2(0,T;H).
  \]
  Because $Q^{1/2}:H\to\widetilde H^{1,1}$ is a Hilbert-Schmidt operator, $\Phi$ is a compact
  linear operator from $L^2(0,T;H)$ (endowed with the weak topology) into
  $C([0,T];\widetilde H^{1,1})$ (endowed with the strong topology).
  Indeed, for every fixed $t$, the map $v\mapsto \int_0^t Q^{1/2}v(s)ds$ is Hilbert-Schmidt,
  and the family $\{\Phi(v_n)\}$ is equi-continuous in $C([0,T];\widetilde H^{1,1})$ by
  the a priori bound $\|\Phi(v)(t)-\Phi(v)(s)\|_{\widetilde H^{1,1}} \le |t-s|^{1/2} \|Q^{1/2}\|_{L_2(H,\widetilde H^{1,1})} \|v\|_{L^2(H)}$.
  The Arzel\`{a}-Ascoli theorem then yields the compactness (see e.g.\ \cite{Wang2023}).
  Consequently, the weak convergence $\varphi_n\rightharpoonup\varphi$ implies
  \begin{equation}\label{eq:Phi_conv}
    \Phi(\varphi_n) \longrightarrow \Phi(\varphi) \quad\text{strongly in } C([0,T];\widetilde H^{1,1}).
  \end{equation}

  Set $w_n = z^{\varphi_n} - z^\varphi$. Subtracting the equations for $\varphi_n$ and $\varphi$,
  we obtain
  \begin{equation}\label{eq:diff_ske}
    \partial_t w_n + A w_n = Q^{1/2}(\varphi_n-\varphi),\qquad w_n(0)=0.
  \end{equation}
  Taking the $L^2$ inner product with $w_n$ gives
  \begin{equation}\label{eq:energy_diff}
    \frac12\frac{d}{dt}\|w_n\|_{L^2}^2 + \|\partial_1 w_n\|_{L^2}^2
    = \langle Q^{1/2}(\varphi_n-\varphi), w_n\rangle .
  \end{equation}
  To handle the right-hand side, we use the primitive $\Phi$ and integrate by parts:
  \[
    \langle Q^{1/2}(\varphi_n-\varphi), w_n\rangle
    = \frac{d}{dt}\langle \Phi(\varphi_n-\varphi), w_n\rangle
      - \langle \Phi(\varphi_n-\varphi), \partial_t w_n\rangle .
  \]
  Insert $\partial_t w_n = -A w_n + Q^{1/2}(\varphi_n-\varphi)$ into the second term:
  \begin{align*}
    \langle Q^{1/2}(\varphi_n-\varphi), w_n\rangle
    &= \frac{d}{dt}\langle \Phi_n, w_n\rangle
       - \langle \Phi_n, -A w_n + Q^{1/2}(\varphi_n-\varphi)\rangle \\
    &= \frac{d}{dt}\langle \Phi_n, w_n\rangle
       + \langle \Phi_n, A w_n\rangle
       - \langle \Phi_n, Q^{1/2}(\varphi_n-\varphi)\rangle ,
  \end{align*}
  where we write $\Phi_n = \Phi(\varphi_n-\varphi)$ for brevity.
  Since $A = -\mathbb{P}\partial_1^2$ is symmetric,
  \[
    \langle \Phi_n, A w_n\rangle
    = \langle A \Phi_n, w_n\rangle
    = -\langle \partial_1^2 \Phi_n, w_n\rangle
    = \langle \partial_1 \Phi_n, \partial_1 w_n\rangle .
  \]

  Substituting back into \eqref{eq:energy_diff} and integrating in time, we get
  \begin{align}\label{eq:integral_identity}
    \frac12\|w_n(t)\|_{L^2}^2 &+ \int_0^t \|\partial_1 w_n\|_{L^2}^2\,ds
      = \langle \Phi_n(t), w_n(t)\rangle \notag\\
    &\quad + \int_0^t \langle \partial_1 \Phi_n, \partial_1 w_n\rangle\,ds
      - \int_0^t \langle \Phi_n, Q^{1/2}(\varphi_n-\varphi)\rangle\,ds .
  \end{align}
  (The boundary term at $s=0$ vanishes because $w_n(0)=0$ and $\Phi_n(0)=0$.)

  Now we estimate each term on the right-hand side.  Recall from \eqref{eq:Phi_conv} that
  \begin{equation}\label{eq:Phi_small}
    \sup_{0\le t\le T}\|\Phi_n(t)\|_{\widetilde H^{1,1}} \longrightarrow 0 \quad\text{as } n\to\infty .
  \end{equation}
    Since $\|u\|_{L^2} \le \|u\|_{\widetilde H^{1,1}}$ and
  $\|\partial_1 u\|_{L^2} \le \|u\|_{\widetilde H^{1,1}}$ for every $u\in\widetilde H^{1,1}$,
  the convergence \eqref{eq:Phi_conv} implies
  \[
    \|\Phi_n\|_{C([0,T];L^2)}
    = \sup_{0\le t\le T}\|\Phi_n(t)\|_{L^2}
    \le \sup_{0\le t\le T}\|\Phi_n(t)\|_{\widetilde H^{1,1}} \longrightarrow 0,
  \]
  and similarly $\|\partial_1 \Phi_n\|_{C([0,T];L^2)} \to 0$.

 For the first term,
      \[
        |\langle \Phi_n(t), w_n(t)\rangle|
        \le \|\Phi_n(t)\|_{L^2} \|w_n(t)\|_{L^2}
        \le \frac14 \|w_n(t)\|_{L^2}^2 + \|\Phi_n(t)\|_{L^2}^2 .
      \]
 For the second term,
      \begin{align*}
        \int_0^t |\langle \partial_1 \Phi_n, \partial_1 w_n\rangle|\,ds
        &\le \int_0^t \|\partial_1 \Phi_n\|_{L^2} \|\partial_1 w_n\|_{L^2}\,ds\\
        &\le \frac12 \int_0^t \|\partial_1 w_n\|_{L^2}^2\,ds
          + \frac12 \int_0^t \|\partial_1 \Phi_n\|_{L^2}^2\,ds .
      \end{align*}
 For the last term,
      \begin{align*}
        \int_0^t |\langle \Phi_n, Q^{1/2}(\varphi_n-\varphi)\rangle|\,ds
        &\le \int_0^t \|\Phi_n\|_{L^2} \|Q^{1/2}(\varphi_n-\varphi)\|_{L^2}\,ds\\
        &\le \Bigl(\sup_{s\le T}\|\Phi_n(s)\|_{L^2}\Bigr)
          \int_0^T \|Q^{1/2}(\varphi_n-\varphi)\|_{L^2}\,ds .
     \end{align*}
      Since $Q^{1/2}:H\to L^2$ is bounded and $\{\varphi_n-\varphi\}$ is weakly convergent hence bounded in $L^2(H)$,
      the integral $\int_0^T \|Q^{1/2}(\varphi_n-\varphi)\|_{L^2}\,ds$ is uniformly bounded, together with
      $\sup\|\Phi_n\|_{L^2}\to0$ we get that this term tends to zero.

  Absorbing the $\|\partial_1 w_n\|_{L^2}^2$ and $\|w_n\|_{L^2}^2$ terms into the left-hand side,
   we obtain for all $t\in[0,T]$
  \begin{equation}\label{eq:gronwall_temp}
    \|w_n(t)\|_{L^2}^2 + \int_0^t \|\partial_1 w_n\|_{L^2}^2\,ds
    \le C_1 \int_0^t \|w_n(s)\|_{L^2}^2\,ds + \varepsilon_n,
  \end{equation}
  where $\varepsilon_n\to0$ as $n\to\infty$ (collecting all the terms that involve $\Phi_n$ and vanish
  due to \eqref{eq:Phi_small}).  Applying Gronwall's inequality to \eqref{eq:gronwall_temp} yields
  \[
    \sup_{0\le t\le T}\|w_n(t)\|_{L^2}^2 \le \varepsilon_n \, e^{C_1 T} \longrightarrow 0 .
  \]
  Hence $w_n\to0$ strongly in $C([0,T];L^2)$.  Plugging this back into \eqref{eq:gronwall_temp} also gives
  $\int_0^T \|\partial_1 w_n\|_{L^2}^2\,dt\to0$.

  It remains to verify strong convergence in the $\widetilde H^{1,1}$-norm.  Apply $\partial_2$ to the
  difference equation \eqref{eq:diff_ske} and take the $L^2$ inner product with $\partial_2 w_n$.  An
  analogous computation using the fact that $\Phi(\varphi_n-\varphi)\to0$ in $C([0,T];\widetilde H^{1,1})$
  (which also implies $\|\partial_2 \Phi_n\|_{C(L^2)}\to0$ and $\|\partial_1\partial_2\Phi_n\|_{C(L^2)}\to0$)
  shows that
  \[
    \sup_{t\le T}\|\partial_2 w_n(t)\|_{L^2}^2 + \int_0^T \|\partial_1\partial_2 w_n\|_{L^2}^2\,dt \longrightarrow 0 .
  \]
  Consequently,
  \[
    \|w_n\|_{G}^2 = \sup_{t}\|w_n\|_{L^2}^2 + \int_0^T \|w_n\|_{\widetilde H^{1,1}}^2\,dt \longrightarrow 0 .
  \]
  This proves that $\{z^{\varphi_n}\}$ converges strongly in $G$ to $z^{\varphi}$.

  We have shown that every sequence in $S_N$ admits a subsequence for which the corresponding solutions
  converge in $G$ to a limit of the same form.  Hence $K_N$ is sequentially compact, and since $G$ is a metric space,
  sequential compactness is equivalent to compactness.  The proof is complete.
\end{proof}

Consider controlled processes. For \(v^\varepsilon\in\mathcal A_N\) (predictable processes with \(\int_0^T\|v^\varepsilon\|^2\le N\) a.s.), define
\[
  z^\varepsilon_{v^\varepsilon} = \mathcal G^\varepsilon\Bigl(W + \frac1{\sqrt\varepsilon}\int_0^\cdot v^\varepsilon(s)ds\Bigr),
\]
which satisfies
\begin{align}\label{eq:lin_controlled}
  dz^\varepsilon_{v^\varepsilon} + A z^\varepsilon_{v^\varepsilon} dt = Q^{1/2}v^\varepsilon dt + \sqrt\varepsilon\,Q^{1/2}dW,\quad z^\varepsilon_{v^\varepsilon}(0)=0 .
\end{align}

\begin{lemma}[Convergence of the controlled linear processes]\label{lem:conv}
  Let $\{v^\varepsilon\}_{\varepsilon>0}\subset\mathcal A_N$ converge in distribution
  (as $S_N$-valued random variables) to $v\in\mathcal A_N$.  Then
  \[
    z^\varepsilon_{v^\varepsilon}:= \mathcal G^\varepsilon\Bigl(W + \frac{1}{\sqrt\varepsilon}\int_0^\cdot v^\varepsilon(s)ds\Bigr)
    \xrightarrow{\;d\;} z^v \quad\text{in } G,
  \]
  where $G = C([0,T];L^2)\cap L^2(0,T;\widetilde H^{1,1})$.
\end{lemma}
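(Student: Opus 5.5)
By linearity of \eqref{eq:lin_controlled}, I will split the controlled process into a deterministic skeleton part and a small stochastic part:
\[
  z^\varepsilon_{v^\varepsilon} = z^{v^\varepsilon} + \sqrt\varepsilon\, z_1,
\]
where $z^{v^\varepsilon}$ solves the skeleton equation \eqref{eq:ske} pathwise with control $v^\varepsilon(\omega)$. The remainder $z_1$ is the Ornstein--Uhlenbeck process
\[
  dz_1 + A z_1\,dt = Q^{1/2}dW,\qquad z_1(0)=0 .
\]
This follows from uniqueness of solutions of the linear equation. The claim then reduces to two facts: (i) $\sqrt\varepsilon\, z_1\to0$ in probability in $G$, and (ii) $z^{v^\varepsilon}\to z^v$ in distribution in $G$. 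Slutsky's lemma in the Polish space $G$ combines the two.

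For (i), I will show $\mathbb E\|z_1\|_G^2<\infty$; it then follows that $\mathbb E\|\sqrt\varepsilon z_1\|_G^2=\varepsilon\,\mathbb E\|z_1\|_G^2\to0$.
\begin{itemize}
  \item \emph{$L^2$ part.} Apply It\^o's formula to $\|z_1\|_{L^2}^2$ and use $\langle Az_1,z_1\rangle=\|\partial_1 z_1\|^2$. This gives
  \[
    \|z_1(t)\|^2+2\int_0^t\|\partial_1 z_1\|^2\,ds = 2\int_0^t\langle z_1,Q^{1/2}dW\rangle + t\,\|Q^{1/2}\|_{L_2(H,H)}^2 .
  \]
  \item \emph{Supremum in time.} The Burkholder--Davis--Gundy inequality controls the supremum of the martingale term by $C\,\mathbb E\bigl(\int_0^T\|z_1\|^2\|Q^{1/2}\|_{L_2}^2ds\bigr)^{1/2}$. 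This is absorbed by Young's inequality into $\frac12\mathbb E\sup_t\|z_1\|^2$ plus a constant.
  \item \emph{Vertical derivative.} Apply the same argument to $\partial_2 z_1$, which solves the same equation with noise $\partial_2 Q^{1/2}dW$. Since $\partial_2$ commutes with $A$ on the torus, this gives $\mathbb E\sup_t\|\partial_2 z_1\|^2+\mathbb E\int_0^T\|\partial_1\partial_2 z_1\|^2\le C\|Q^{1/2}\|_{L_2(H,\widetilde H^{1,1})}^2$. Here assumption \eqref{eq:Qreg} is essential.
  \item \emph{Justification.} These computations are formal and will be justified by Galerkin truncation in Fourier modes. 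This is routine because $A$ is diagonal in the Fourier basis.
\end{itemize}
Together these bound every component of $\|z_1\|_G^2$ in expectation.

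For (ii), I will use that $S_N$ with the weak topology is a compact Polish space. Lemma~\ref{lem:compact}, and more precisely the argument in its proof, shows that the map $\varphi\mapsto z^\varphi=\mathcal G^0(\int_0^\cdot\varphi)$ is continuous from $S_N$ (weak topology) to $G$. Strictly, that proof only showed convergence along a subsequence to $z^\varphi$, where $\varphi$ is the weak limit. Since the weak limit is unique and every subsequence has a further subsequence converging to the same $z^\varphi$, the whole sequence converges, which is continuity. The continuous mapping theorem then turns $v^\varepsilon\to v$ in distribution in $S_N$ into $z^{v^\varepsilon}\to z^v$ in distribution in $G$.

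The main obstacle I expect is this continuity step. The integration-by-parts argument of Lemma~\ref{lem:compact} uses the compactness of $\Phi$ from weak $L^2(0,T;H)$ to strong $C([0,T];\widetilde H^{1,1})$, and that compactness relies on $Q^{1/2}$ being Hilbert--Schmidt into $\widetilde H^{1,1}$. If more care is needed, I would repeat that energy/Gronwall argument for a general weakly convergent sequence, handling the $\partial_2$-level estimate exactly as for the $L^2$ level. The stochastic estimate (i), by contrast, is standard.
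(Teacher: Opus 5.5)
Your proposal is correct and follows the paper's proof essentially step for step. The paper likewise splits $z^\varepsilon_{v^\varepsilon}=z_{v^\varepsilon}+m^\varepsilon$, with $m^\varepsilon$ (your $\sqrt\varepsilon\,z_1$) solving the noise-only linear equation, and kills $m^\varepsilon$ in $L^2(\Omega;G)$ by It\^o's formula and BDG at the $L^2$ and $\partial_2$ levels; it then passes from $v^\varepsilon\to v$ to $z_{v^\varepsilon}\to z^v$ via the weak-to-strong continuity of Lemma~\ref{lem:compact} and the continuous mapping theorem, and concludes with Slutsky. Your explicit sub-subsequence argument upgrading Lemma~\ref{lem:compact} to genuine continuity, and your omission of the paper's Part~1 uniform moment bounds (which the conclusion never uses), are minor tightenings rather than a different route.
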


\begin{proof}
  We split the proof into three main parts.

  \medskip
  \noindent\textbf{Part~1. Uniform moment bounds.}
  By the Girsanov theorem, the process $z^\varepsilon_{v^\varepsilon}$ is the unique strong solution of the
  linear equation
  \begin{equation}\label{eq:lin_ctrl}
    dz^\varepsilon_{v^\varepsilon} + A z^\varepsilon_{v^\varepsilon}\,dt
    = Q^{1/2} v^\varepsilon dt + \sqrt\varepsilon\, Q^{1/2} dW, \qquad
    z^\varepsilon_{v^\varepsilon}(0)=0 .
  \end{equation}
  We shall derive uniform estimates for $\mathbb E\|z^\varepsilon_{v^\varepsilon}\|_G^2$.

  \textit{Estimate of $\|z^\varepsilon_{v^\varepsilon}\|_{L^2}^2$.}
  Applying It\^o's formula to $\|z^\varepsilon_{v^\varepsilon}(t)\|_{L^2}^2$ gives
  \begin{align}\label{ito1}
    d\|z\|^2 + 2\|\partial_1 z\|^2 dt
    &= 2\langle z, Q^{1/2} v^\varepsilon\rangle dt
      + 2\sqrt\varepsilon \langle z, Q^{1/2} dW\rangle
      + \varepsilon\,\mathrm{Tr}(Q)\,dt .
  \end{align}
  Take expectation.  By the Cauchy-Schwarz and Young inequalities,
  \[
    \mathbb E|\langle z, Q^{1/2} v^\varepsilon\rangle|
    \le \tfrac12 \mathbb E\|z\|^2 + \tfrac12 \mathbb E\|Q^{1/2} v^\varepsilon\|^2 .
  \]
  Since $Q^{1/2}:H\to H$ is bounded, we have
  $\mathbb E\|Q^{1/2} v^\varepsilon\|^2 \le C_Q\, \mathbb E\|v^\varepsilon\|^2$.
  Moreover, $\mathbb E\int_0^T\|v^\varepsilon\|^2 dt\le N$ by assumption.
  Discarding the non-negative term $2\mathbb E\|\partial_1 z\|^2$ and using
  $0<\varepsilon\le1$, we obtain the differential inequality
  \begin{equation}\label{ineq1}
    \frac{d}{dt}\mathbb E\|z\|^2 \le \mathbb E\|z\|^2 + C(N,Q) .
  \end{equation}
  Gronwall's lemma yields, for all $t\in[0,T]$,
  \begin{equation}\label{L2bound}
    \mathbb E\|z^\varepsilon_{v^\varepsilon}(t)\|_{L^2}^2 \le C(T,N,Q) .
  \end{equation}
  Furthermore, applying the Burkholder-Davis-Gundy inequality to the stochastic
  integral in \eqref{ito1} before taking expectation gives the pathwise bound
  \begin{equation}\label{supL2}
    \mathbb E\Bigl[ \sup_{0\le t\le T}\|z^\varepsilon_{v^\varepsilon}(t)\|_{L^2}^2 \Bigr]
    \le C(T,N,Q) .
  \end{equation}
  \textit{Estimate of $\|\partial_2 z^\varepsilon_{v^\varepsilon}\|_{L^2}^2$.}
  Apply the vertical derivative $\partial_2$ to \eqref{eq:lin_ctrl},
 hence $\partial_2 z$ satisfies
  \[
    d(\partial_2 z) + A(\partial_2 z)\,dt
    = \partial_2 Q^{1/2} v^\varepsilon dt + \sqrt\varepsilon\, \partial_2 Q^{1/2} dW .
  \]
  This is exactly the same form as \eqref{eq:lin_ctrl} with $z$ replaced by $\partial_2 z$
  and $Q^{1/2}$ replaced by $\partial_2 Q^{1/2}$.  The Hilbert-Schmidt assumption
  $Q^{1/2}:H\to\widetilde H^{1,1}$ guarantees that $\partial_2 Q^{1/2}: H\to L^2$ is
  also a bounded operator.  Thus, repeating the same argument
  as above we obtain
  \begin{equation}\label{supH1part}
    \mathbb E\Bigl[ \sup_{0\le t\le T}\|\partial_2 z^\varepsilon_{v^\varepsilon}(t)\|_{L^2}^2 \Bigr]
    \le C(T,N,Q) .
  \end{equation}
  In addition, integrating \eqref{ito1} over $[0,T]$ and using \eqref{L2bound} gives
  \begin{equation}\label{diss1}
    \mathbb E\int_0^T \|\partial_1 z^\varepsilon_{v^\varepsilon}(t)\|_{L^2}^2\,dt
    \le C(T,N,Q) .
  \end{equation}
  Similarly, the analog of \eqref{diss1} for $\partial_2 z$ provides
  \begin{equation}\label{diss2}
    \mathbb E\int_0^T \|\partial_1\partial_2 z^\varepsilon_{v^\varepsilon}(t)\|_{L^2}^2\,dt
    \le C(T,N,Q) .
  \end{equation}

  Collecting \eqref{supL2}, \eqref{supH1part}, \eqref{diss1} and \eqref{diss2}, and using
  the definition of the $\widetilde H^{1,1}$ norm, we conclude the uniform energy estimate for $\forall\,v^\varepsilon\in\mathcal A_N ,  \forall \varepsilon\in(0,1],$
  \begin{equation}\label{energyG}
    \mathbb E\Bigl[ \|z^\varepsilon_{v^\varepsilon}\|_G^2 \Bigr]
    = \mathbb E\Bigl[ \sup_{t}\|z^\varepsilon_{v^\varepsilon}\|_{L^2}^2
               + \int_0^T \|z^\varepsilon_{v^\varepsilon}\|_{\widetilde H^{1,1}}^2\,dt \Bigr]
    \le C(T,N,Q) .
  \end{equation}

  \medskip
  \textbf{Step 2. Decomposition of the controlled process.}
For each $\varepsilon>0$ and a given control $v^\varepsilon\in\mathcal A_N$, let
$z_{v^\varepsilon}$ be the unique solution of the \emph{deterministic} skeleton equation
\begin{equation}\label{eq:skeleton_v}
  \partial_t z_{v^\varepsilon} + A z_{v^\varepsilon} = Q^{1/2} v^\varepsilon,\qquad
  z_{v^\varepsilon}(0)=0 .
\end{equation}
Define the difference
\[
  m^\varepsilon(t) := z^\varepsilon_{v^\varepsilon}(t) - z_{v^\varepsilon}(t),\qquad t\in[0,T].
\]
Subtracting \eqref{eq:skeleton_v} from \eqref{eq:lin_ctrl} we obtain
\[
  dm^\varepsilon + A m^\varepsilon\,dt = \sqrt{\varepsilon}\,Q^{1/2}dW,\qquad m^\varepsilon(0)=0 .
\]
Applying It\^o's formula to $\|m^\varepsilon(t)\|_{L^2}^2$ gives
\[
  \frac12 \mathbb{E}\|m^\varepsilon(t)\|_{L^2}^2 + \mathbb{E}\int_0^t \|\partial_1 m^\varepsilon(s)\|_{L^2}^2\,ds
  = \frac{\varepsilon}{2}\, \mathrm{Tr}(Q)\,t .
\]
By the Burkholder-Davis-Gundy inequality, we deduce, for every $\varepsilon\in(0,1)$,
\begin{equation}\label{bd:diff_L2}
  \mathbb{E}\Bigl[ \sup_{0\le t\le T}\|m^\varepsilon(t)\|_{L^2}^2 \Bigr] \le C\,\varepsilon\,\mathrm{Tr}(Q)\,T .
\end{equation}
A completely analogous computation with $\partial_2 m^\varepsilon$ yields
\[
  \mathbb{E}\Bigl[ \sup_{0\le t\le T}\|\partial_2 m^\varepsilon(t)\|_{L^2}^2
    + \int_0^T \|\partial_1\partial_2 m^\varepsilon(t)\|_{L^2}^2\,dt \Bigr]
  \le C\,\varepsilon\,\mathrm{Tr}(Q)\,T .
\]
Combining this with \eqref{bd:diff_L2} and the definition of the $G$-norm we conclude
\begin{equation}\label{conv_diff_zero}
  \mathbb{E}\bigl[ \|m^\varepsilon\|_G^2 \bigr] \xrightarrow{\;\varepsilon\to0\;} 0,
\end{equation}
and therefore $m^\varepsilon \to 0$ in probability in $G$.

\medskip
\noindent\textbf{Step 3. Convergence of the skeleton part.}
The compactness result of Lemma~\ref{lem:compact} tells us that the map
\[
  T: S_N \ni \varphi \longmapsto z^\varphi \in G
\]
is continuous when $S_N$ is endowed with the weak topology of $L^2(0,T;H)$ and $G$
is given its strong topology. Indeed, if $\varphi_n\rightharpoonup\varphi$ weakly in
$L^2(0,T;H)$, then $z^{\varphi_n}\to z^\varphi$ strongly in $G$. Consequently, $T$ is
also continuous as a mapping from the metric space $(S_N, \text{weak})$ to $G$, hence
it preserves convergence in distribution.
By our assumption, $v^\varepsilon \xrightarrow{\;d\;} v$ as $S_N$-valued random
variables (where $S_N$ is equipped with the weak topology). Therefore
\begin{equation}\label{conv_skeleton}
  z_{v^\varepsilon} = T(v^\varepsilon) \;\xrightarrow{\;d\;}\; T(v) = z^v \quad\text{in } G .
\end{equation}

We have the decomposition
\[
  z^\varepsilon_{v^\varepsilon} = z_{v^\varepsilon} + m^\varepsilon,
\]
where $z_{v^\varepsilon} \xrightarrow{\;d\;} z^v$ in $G$ and $m^\varepsilon \xrightarrow{\;P\;} 0$ in $G$.
Since $G$ is a separable metric space, Slutsky's theorem (see e.g.\ \cite{Billingsley})
implies that $z^\varepsilon_{v^\varepsilon}$ converges in distribution to $z^v$ in $G$.
This completes the proof of Lemma~\ref{lem:conv}.
\end{proof}

\begin{theorem}[LDP for the linear equation]\label{th:linLDP}
  The family \(\{z^\varepsilon\}\) satisfies the LDP in \(G\) with good rate function
  \[
    I_0(z) = \frac12 \inf\Bigl\{ \int_0^T \|\varphi\|_{H}^2\,dt : \varphi\in L^2(0,T;H),\; z = z^\varphi \Bigr\}.
  \]
\end{theorem}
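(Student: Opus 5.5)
The plan is to apply the weak convergence criterion of Budhiraja--Dupuis--Maroulas \cite{BD,BDM} to the measurable maps $\mathcal G^\varepsilon: C([0,T];U)\to G$ and $\mathcal G^0$. The criterion has two hypotheses, and the two preceding lemmas supply them almost directly. The first is that for each $N<\infty$ the level set
\[
\Bigl\{\mathcal G^0\Bigl(\int_0^\cdot \varphi\,ds\Bigr):\varphi\in S_N\Bigr\}=K_N
\]
is compact in $G$. The second is that whenever $v^\varepsilon\in\mathcal A_N$ converges in distribution to $v$ (with $S_N$ carrying the weak topology), the controlled processes $\mathcal G^\varepsilon\bigl(W+\varepsilon^{-1/2}\int_0^\cdot v^\varepsilon ds\bigr)$ converge in distribution in $G$ to $\mathcal G^0\bigl(\int_0^\cdot v\,ds\bigr)=z^v$.

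\emph{Step one: the setting.} I will check that $G$ is a Polish space. It is a closed subspace of the product of the separable Banach spaces $C([0,T];L^2)$ and $L^2(0,T;\widetilde H^{1,1})$, via the diagonal embedding. I will also check that the identification $z^\varepsilon=\mathcal G^\varepsilon(W)$ is legitimate, since $z^\varepsilon$ is a strong solution driven by $W$. The controlled equation \eqref{eq:lin_controlled} follows from Girsanov's theorem, because $\int_0^T\|v^\varepsilon\|^2\le N$ a.s. makes the Novikov condition hold.

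\emph{Step two: the two hypotheses.} Compactness of $K_N$ is Lemma~\ref{lem:compact}. I will note additionally that the map $\varphi\mapsto z^\varphi$ is continuous from $(S_N,\text{weak})$ to $G$; this was in fact proved there. The weak convergence hypothesis is exactly Lemma~\ref{lem:conv}. The abstract theorem then gives the Laplace principle, and hence the LDP, with good rate function
\[
I_0(z)=\inf\Bigl\{\tfrac12\int_0^T\|\varphi\|_H^2dt:\ z=\mathcal G^0\Bigl(\int_0^\cdot\varphi\Bigr)\Bigr\},
\]
which is the stated $I_0$ by the definition of $\mathcal G^0$.

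\emph{Step three: goodness of $I_0$.} The sublevel set $\{I_0\le N/2\}$ equals $K_N$. The inclusion $\{I_0\le N/2\}\subset K_N$ needs the infimum to be attained. I will get this from weak compactness of $S_{N+\delta}$, weak lower semicontinuity of the $L^2$ norm, and the weak-to-strong continuity above, which shows that limits of minimizing sequences still produce $z$. Hence each sublevel set is compact.

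\emph{Main obstacle.} The main point of care is the measurability and uniqueness needed to define $\mathcal G^\varepsilon$ on all of $C([0,T];U)$ so that the shifted argument makes sense. The controlled process must then be identified with the solution of \eqref{eq:lin_controlled}. Since the equation is linear with additive noise, I will handle this through the mild formulation $z=\int_0^t e^{-(t-s)A}\sqrt\varepsilon Q^{1/2}dW(s)$. Here $e^{-tA}$ commutes with $\partial_2$ and is a contraction on $\widetilde H^{0,1}$, which makes the identification routine.
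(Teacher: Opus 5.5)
Your proposal is correct and follows the paper's route: the paper's proof is precisely a direct application of the Budhiraja--Dupuis(--Maroulas) criterion, with Lemma~\ref{lem:compact} supplying compactness of $K_N$ and Lemma~\ref{lem:conv} supplying convergence in distribution of the controlled processes. Your extra checks are standard details that the paper leaves implicit: Polishness of $G$, the Girsanov identification of the controlled equation, measurability of $\mathcal G^\varepsilon$, and attainment of the infimum in $I_0$. Your Step three is also not strictly needed, since the abstract theorem already gives goodness of $I_0$ once the sets $K_N$ are compact.
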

\begin{proof}
The proof is a direct application of the Budhiraja-Dupuis theorem \cite{BD,BDM} combined with Lemmas \ref{lem:compact} and \ref{lem:conv}.
\end{proof}

\section{Uniform Large Deviations}
\label{sec:nonlinear}

In this section we prove the solutions of the
anisotropic Navier--Stokes equations \eqref{eq:SNSintro} satisfy a uniform large deviation
principle in $C([0,T];H)$ with respect to initial data belonging to $\widetilde H^{0,1}$.

\subsection{The solution map and its regularity}
\label{sec:map}

For $z\in G = C([0,T];H)\cap L^2(0,T;\widetilde H^{1,1})$ and initial data
$x\in\widetilde H^{0,1}$, consider the deterministic auxiliary equation
\begin{equation}\label{eq:aux_v}
  \partial_t v + A v + \mathbb P\bigl(((v+z)\cdot\nabla)(v+z)\bigr) = 0,
  \qquad v(0)=x .
\end{equation}
By the anisotropic energy method of \cite{LZZ}, one obtains a unique solution
\[
v \in L^\infty(0,T;\widetilde H^{0,1}) \cap L^2(0,T;\widetilde H^{1,1}) .
\]
Because $v\in L^2(0,T;\widetilde H^{1,1})\subset L^2(0,T;H^1)$,
we have $\|A v\|_{H^{-1}} \le C\|v\|_{H^1}$, hence $A v \in L^2(0,T;H^{-1})$.
  Because $\nabla\cdot v = \nabla\cdot z = 0$, the velocity field
  $u := v+z$ is also divergence-free.  Hence the nonlinear term can be
  rewritten as a divergence:
  \[
    (u\cdot\nabla)u = \nabla\cdot(u\otimes u).
  \]
  For any test function $\phi\in H^1(\mathbb{T}^2;\mathbb{R}^2)$,
  integration by parts gives
  \[
    |\langle (u\cdot\nabla)u, \phi\rangle|
    = |\langle u\otimes u, \nabla\phi\rangle|
    \le \|u\otimes u\|_{L^2}\,\|\nabla\phi\|_{L^2}
    \le \|u\|_{L^4}^2\,\|\phi\|_{H^1}.
  \]
  Consequently,
  \begin{equation}\label{eq:H-1_est}
    \|(u\cdot\nabla)u\|_{H^{-1}} \le \|u\|_{L^4}^2\le C \|u\|_{L^2}\,\|\nabla u\|_{L^2}.
  \end{equation}
 From the a priori bounds
  $v\in L^\infty(0,T;\widetilde H^{0,1})\cap L^2(0,T;\widetilde H^{1,1})$
  and $z\in C([0,T];H)\cap L^2(0,T;\widetilde H^{1,1})$, we obtain
  \[
    u \in L^\infty(0,T;L^2) \cap L^2(0,T;H^1).
  \]
  Therefore, squaring \eqref{eq:H-1_est} and integrating in time,
  \begin{align*}
    \int_0^T \|(u\cdot\nabla)u\|_{H^{-1}}^2\,dt
    &\le C \int_0^T \|u\|_{L^2}^2\,\|\nabla u\|_{L^2}^2\,dt \\
    &\le C \|u\|_{L^\infty(0,T;L^2)}^2 \int_0^T \|\nabla u\|_{L^2}^2\,dt
    < \infty .
  \end{align*}
  Hence $(u\cdot\nabla)u \in L^2(0,T;H^{-1})$. Therefore $\partial_t v \in L^2(0,T;H^{-1})$.

Now we apply the classical Aubin-Lions compactness lemma (see \cite{Temam})
with the triple
\[
\widetilde H^{1,1} \hookrightarrow H \subset H^{-1}.
\]
(The compactness of $\widetilde H^{1,1}\hookrightarrow H$ follows from
$\widetilde H^{1,1}\subset H^1$ and the Rellich theorem, which implies
$H^1 \hookrightarrow L^2$ on the torus, restricting to the closed subspace
$\widetilde H^{1,1}$ preserves compactness.)
Since $v\in L^2(0,T;\widetilde H^{1,1})$ and $\partial_t v \in L^2(0,T;H^{-1})$,
the Aubin--Lions lemma yields $v\in C([0,T];H)$.

The nonlinear solution map $\mathcal F_x : G \to C([0,T];H)$ is then defined by
\[
\mathcal F_x(z) := z + v, \qquad z\in G .
\]
When $z=z^\varepsilon$ is the solution of the linear stochastic equation,
$u^\varepsilon = \mathcal F_x(z^\varepsilon)$ coincides with the solution of
the original stochastic Navier-Stokes equations.

\subsection{Uniform Lipschitz estimate}
\label{sec:lipschitz}

The following proposition is the key technical ingredient of the paper.
It shows that $\mathcal F_x$ is Lipschitz continuous on bounded subsets of $G$,
uniformly with respect to $x$ in bounded subsets of $\widetilde H^{0,1}$.

\begin{proposition}\label{prop:Lip}
  For every $R>0$, there exists a constant $L_R>0$ such that for all
  $z_1,z_2\in G$ with $\|z_i\|_G\le R$ and all $x$ with $\|x\|_{\widetilde H^{0,1}}\le R$,
  \[
    \|\mathcal F_x(z_1) - \mathcal F_x(z_2)\|_{C([0,T];H)} \le L_R \|z_1-z_2\|_G .
  \]
\end{proposition}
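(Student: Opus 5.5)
We write $v_i$ for the solution of \eqref{eq:aux_v} with data $(x,z_i)$, $u_i=v_i+z_i=\mathcal F_x(z_i)$, and $w=v_1-v_2$, $\zeta=z_1-z_2$. Then $\mathcal F_x(z_1)-\mathcal F_x(z_2)=w+\zeta$ and $\|\zeta\|_{C([0,T];H)}\le\|\zeta\|_G$. It therefore suffices to bound $\sup_t\|w(t)\|$ by $C_R\|\zeta\|_G$.

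The plan has three steps, carried out in order.

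\textbf{Step 1: uniform a priori bounds.} We first show that
\[
\sup_t\|v_i\|_{\widetilde H^{0,1}}^2+\int_0^T\|v_i\|_{\widetilde H^{1,1}}^2\,dt\le M_R
\]
for all $\|z_i\|_G\le R$ and $\|x\|_{\widetilde H^{0,1}}\le R$. This comes from redoing the anisotropic energy method of \cite{LZZ} for \eqref{eq:aux_v} with $z$ treated as a forcing, tracking the dependence on $\|z\|_G$. In the $L^2$ estimate, $\langle (u\cdot\nabla)z,v\rangle$ is handled by Lemma~\ref{lem:GN}. In the $\partial_2$ estimate, the dangerous term $\langle\partial_2 v_2\,\partial_2 v,\partial_2 v\rangle$ is rewritten via $\partial_2v_2=-\partial_1v_1$ and then integrated by parts in $x_1$. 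This is the standard LZZ trick.

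\textbf{Step 2: energy estimate for the difference.} The equation for $w$ is
\[
\partial_t w+Aw+\mathbb P\bigl[(u_1\cdot\nabla)u_1-(u_2\cdot\nabla)u_2\bigr]=0,\qquad w(0)=0.
\]
With $\delta=u_1-u_2=w+\zeta$, the nonlinear difference equals $(u_1\cdot\nabla)\delta+(\delta\cdot\nabla)u_2$. Pairing with $w$, the term $\langle(u_1\cdot\nabla)w,w\rangle$ vanishes by incompressibility. We are left with three terms:
\begin{itemize}
\item $\langle (u_1\cdot\nabla)\zeta,w\rangle$;
\item $\langle (w\cdot\nabla)u_2,w\rangle$;
\item $\langle (\zeta\cdot\nabla)u_2,w\rangle$.
\end{itemize}
The first and third are linear in $\zeta$. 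We bound them by products such as $\|u_1\|_{L^2_hL^\infty_v}\|\nabla\zeta\|_{L^2_hL^2_v}\|w\|_{L^\infty_hL^2_v}$ (only the $\partial_1\zeta,\partial_2\zeta$ parts appear, and both are controlled in $L^2(0,T)$ by $\|\zeta\|_G$). Lemma~\ref{lem:GN} then controls the factors by $\|w\|^{1/2}(\|w\|+\|\partial_1w\|)^{1/2}$ times $\widetilde H^{1,1}$-type norms of $u_i$. After Young's inequality these terms become $\tfrac14\|\partial_1w\|^2+f(t)\|w\|^2+g(t)\|\zeta\|_{\widetilde H^{1,1}}^2$, where $f,g\in L^1(0,T)$ have integrals bounded by $C(M_R,R)$.

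\textbf{Step 3: the quadratic term in $w$ (main obstacle).} The genuinely delicate term is $\langle (w\cdot\nabla)u_2,w\rangle$, because $w$ has no vertical dissipation. We split it into $\langle w_1\partial_1u_2,w\rangle$ and $\langle w_2\partial_2u_2,w\rangle$.

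For the $w_1\partial_1u_2$ piece, we estimate
\[
|\langle w_1\partial_1u_2,w\rangle|\le\|\partial_1u_2\|_{L^2_hL^\infty_v}\|w\|_{L^2_vL^\infty_h}\|w\|.
\]
Lemma~\ref{lem:GN} gives $\|\partial_1u_2\|_{L^2_hL^\infty_v}^2\lesssim\|\partial_1u_2\|\,\|\partial_1\partial_2u_2\|+\|\partial_1u_2\|^2$, which is integrable in time.

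For the $w_2\partial_2u_2$ piece, we first integrate by parts to move the derivative: it equals $-\langle w_2u_2,\partial_2w\rangle$ minus a term containing $\partial_2w_2=-\partial_1w_1$. I would try to avoid putting $\partial_2$ on $w$ at all. Instead we keep $\partial_2u_2$ and use $\|w_2\|_{L^2_vL^\infty_h}\lesssim\|w\|^{1/2}\|\partial_1w\|^{1/2}+\|w\|$, together with $\|w\|_{L^2_vL^2_h}$ and $\|\partial_2u_2\|_{L^\infty_vL^2_h}$. The last factor is bounded by $\|\partial_2u_2\|^{1/2}\|\partial_2^2u_2\|^{1/2}$. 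Controlling it needs second vertical derivatives of $v_2$, which we do not have, and this is exactly where the argument is tight.

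My first attempt there is the alternative pairing that puts $L^\infty_h$ on $\partial_2u_2$ and $L^\infty_v$ on $w_2$. This uses $\|\partial_2u_2\|_{L^2_vL^\infty_h}^2\lesssim\|\partial_2u_2\|\,\|\partial_1\partial_2u_2\|$, which is in $L^1_t$. The other factor is $\|w_2\|_{L^2_hL^\infty_v}$; rather than $\partial_2w_2$ in Lemma~\ref{lem:GN}, the divergence-free identity $\partial_2w_2=-\partial_1w_1$ supplies the needed control. The payoff is that every factor of $w$ is then controlled by $\|w\|$ and $\|\partial_1w\|$ only.

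\textbf{Conclusion.} Collecting the three steps, we get
\[
\frac{d}{dt}\|w\|^2+\|\partial_1w\|^2\le h(t)\|w\|^2+g(t)\|\zeta\|_{\widetilde H^{1,1}}^2+\tilde g\,\|\zeta\|_{C(H)}^2,
\]
with $\int_0^T h\,dt\le C(R)$. Gronwall's inequality then gives $\sup_t\|w\|^2\le C_R\|\zeta\|_G^2$. The point to verify carefully is that $g$ enters multiplied by $\|\zeta\|_{\widetilde H^{1,1}}^2$ only through bounded factors, so that the integral remains bounded by $C\|\zeta\|_G^2$. Where needed, we bound the $u$-factors in $L^\infty_t$ using the $\sup_t$ part of $M_R$.
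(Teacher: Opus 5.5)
Your architecture is the paper's. You use the same $w,\zeta,\delta$, the same three terms after $\langle (u_1\cdot\nabla)w,w\rangle=0$, and the same Gronwall closure. For $\langle (w\cdot\nabla)u_2,w\rangle$, the pairing you settle on (anisotropic $L^\infty$ on $w_1$, and on $w_2$ via $\partial_2w_2=-\partial_1w_1$, with $\partial_1u_2,\partial_2u_2$ controlled through $\partial_1\partial_2u_2$) is essentially the paper's estimate of $I_2$.

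The gap is in the two terms linear in $\zeta$. For $-\langle(\zeta\cdot\nabla)u_2,w\rangle$ the derivative sits on $u_2$, not on $\zeta$. Integrating by parts only moves it onto $w$ and produces $\partial_2 w$. So the claim that only $\partial_1\zeta,\partial_2\zeta$ appear is false there. More seriously, your remedy for the time-integrability issue you flag at the end does not work. You propose to bound the $u$-factors in $L^\infty_t$ via the $\sup_t$ part of $M_R$. That part controls only $\|v_i\|_{\widetilde H^{0,1}}$ and $\|z_i\|_{L^2}$. Every anisotropic $L^\infty$-type norm of $u_i$ or $\nabla u_i$ produced by Lemma~\ref{lem:GN} involves $\|\partial_1 u_i\|$, $\|\partial_1\partial_2u_i\|$ or $\|\partial_2 z_i\|$, which are only $L^2$ in time. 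Such a factor multiplying $\|\zeta\|_{\widetilde H^{1,1}}^2$, which is itself only $L^1$ in time, does not give an integral bounded by $C\|\zeta\|_G^2$.

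The fix is the paper's bookkeeping. In these two terms, put $w$ in plain $L^2$, so that no $\|\partial_1 w\|$ arises. Put the anisotropic $L^\infty$ norms on $\zeta$ and on $u_1$ (for $I_1$) or $\nabla u_2$ (for $I_3$). For the $I_3$ term this means $\zeta_1$ in $L^\infty_hL^2_v$, $\zeta_2$ in $L^2_hL^\infty_v$ using $\partial_2\zeta_2=-\partial_1\zeta_1$, $\partial_1u_2$ in $L^2_hL^\infty_v$, and $\partial_2u_2$ in $L^\infty_hL^2_v$. Then apply Young as $(\zeta\text{-factor})(u\text{-factor})\|w\|\le (u\text{-factor})^2\|w\|^2+(\zeta\text{-factor})^2$. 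The $u$-factor squared only needs to be in $L^1(0,T)$, because it enters the Gronwall exponent. The $\zeta$-factor squared is $\|\zeta\|\,\|\partial_1\zeta\|+\|\zeta\|^2$ for $I_3$, or $\|\partial_1\zeta\|^2+\|\partial_2\zeta\|^2+\|\partial_1\partial_2\zeta\|^2$ for $I_1$. Either integrates directly to $\lesssim\|\zeta\|_G^2$, and this is where the $C([0,T];H)$ component of $G$ is used.

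Separately, your Step~1 makes explicit what the paper only asserts ("from the a priori estimates for the auxiliary equations"), which is good. However, the sketch does not close as written. The $\partial_2$-estimate for $v$ also contains $\langle u_2\,\partial_2^2 z,\partial_2 v\rangle$. Neither $\partial_2^2 z$ (not controlled by $\|z\|_G$) nor $\partial_2^2 v$ (there is no vertical dissipation) is available, and the LZZ trick for $\langle \partial_2 v_2\,\partial_2 v,\partial_2 v\rangle$ does not address this term. The paper's proof rests on the same unproved bound, so this is a shared caveat rather than a point where you diverge from it.
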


\begin{proof}
  Let $\zeta = z_1-z_2$, $u_i = \mathcal F_x(z_i)=z_i+v_i$ and
  $w = v_1-v_2$, $\delta = u_1-u_2 = \zeta + w$.
  Subtracting the equations \eqref{eq:aux_v} for $v_1,v_2$ we obtain
  \begin{equation}\label{eq:diff_w}
    \partial_t w + A w = - \mathbb P\bigl[(u_1\cdot\nabla)\delta
    + (\delta\cdot\nabla)u_2\bigr], \qquad w(0)=0 .
  \end{equation}
  Take the $L^2$ inner product of \eqref{eq:diff_w} with $w$ and use the
  cancellation $\langle (u_1\cdot\nabla)w, w\rangle =0$
  to obtain the energy identity
  \begin{align}\label{eq:energy_w}
    \frac12 \frac{d}{dt}\|w\|_{L^2}^2 + \|\partial_1 w\|_{L^2}^2
    &= -\langle (u_1\cdot\nabla)\zeta, w\rangle
      - \langle (w\cdot\nabla)u_2, w\rangle
      - \langle (\zeta\cdot\nabla)u_2, w\rangle \notag\\
    &=: I_1 + I_2 + I_3 .
  \end{align}
  We now estimate $I_1,I_2,I_3$ using Lemma~\ref{lem:GN} and the
  divergence-free condition.

  For $I_1$, using H\"older's inequality,
\begin{align*}
  |I_1| &\le \Bigl( \|u_{1,1}\|_{L^\infty_h L^2_v} \|\partial_1\zeta\|_{L^2_h L^\infty_v}
          + \|u_{1,2}\|_{L^2_h L^\infty_v} \|\partial_2\zeta\|_{L^\infty_h L^2_v} \Bigr) \|w\|_{L^2}.
\end{align*}
For the first factor, we apply the Lemma~\ref{lem:GN} to each component of $u_1$:
\begin{align*}
  \|u_{1,1}\|_{L^\infty_h L^2_v}
  &\le C\Bigl(\|u_{1,1}\|_{L^2}^{1/2}\,\|\partial_1 u_{1,1}\|_{L^2}^{1/2}
            + \|u_{1,1}\|_{L^2}\Bigr),\\
  \|u_{1,2}\|_{L^2_h L^\infty_v}
  &\le C\Bigl(\|u_{1,2}\|_{L^2}^{1/2}\,\|\partial_2 u_{1,2}\|_{L^2}^{1/2}
            + \|u_{1,2}\|_{L^2}\Bigr) \\
  &\le C\Bigl(\|u_1\|_{L^2}^{1/2}\,\|\partial_1 u_1\|_{L^2}^{1/2}
            + \|u_1\|_{L^2}\Bigr),
\end{align*}
where in the last line we used the relation $\partial_2 u_{1,2} = -\partial_1 u_{1,1}$
and the fact that $\|\partial_1 u_1\|_{L^2}$ controls
both horizontal derivatives.  The derivatives of $\zeta$ are handled similarly:
\begin{align*}
  \|\partial_1\zeta\|_{L^2_h L^\infty_v}
  &\le C\bigl(\|\partial_1\zeta\|_{L^2}^{1/2}\,\|\partial_1\partial_2\zeta\|_{L^2}^{1/2}
      + \|\partial_1\zeta\|_{L^2}\bigr),\\
\|\partial_2\zeta\|_{L^\infty_h L^2_v}
  &\le C\bigl(\|\partial_2\zeta\|_{L^2}^{1/2}\,\|\partial_1\partial_2\zeta\|_{L^2}^{1/2}
      + \|\partial_2\zeta\|_{L^2}\bigr).
\end{align*}

Inserting these bounds into $|I_1|$ and applying Young's inequality, we obtain,
\begin{align}\label{est_I1}
  |I_1| &\le C \Bigl(1+\| u_1\|_{L^2}^2+\|\partial_1 u_1\|_{L^2}^2
          +\|\partial_1\partial_2 u_1\|_{L^2}^2\Bigr) \|w\|_{L^2}^2 \notag\\
  &\quad + C\Bigl(\|\partial_2\zeta\|_{L^2}^2
          + \|\partial_1\zeta\|_{L^2}^2 + \|\partial_1\partial_2\zeta\|_{L^2}^2\Bigr).
\end{align}

  For $I_2$ we have
  \[
    |I_2| \le \bigl(\|w_1\|_{L^\infty_h L^2_v}\|\partial_1 u_2\|_{L^2_h L^\infty_v}
          + \|w_2\|_{L^2_h L^\infty_v}\|\partial_2 u_2\|_{L^\infty_h L^2_v}\bigr) \|w\|_{L^2}.
  \]
  Applying Lemma~\ref{lem:GN} and the identity $\partial_2 w_2=-\partial_1 w_1$,
  together with the Young inequality, we obtain for any $\eta>0$
  \begin{equation}\label{est_I2}
    |I_2| \le \eta \|\partial_1 w\|_{L^2}^2
      + C_\eta \Bigl(1+\|\partial_1 u_2\|_{L^2}^2+\|\partial_2 u_2\|_{L^2}^2
        +\|\partial_1\partial_2 u_2\|_{L^2}^2\Bigr) \|w\|_{L^2}^2 .
  \end{equation}
  For $I_3$, we have
  \begin{align*}
    |I_3| &\le \bigl(\|\zeta_1\|_{L^\infty_h L^2_v}\|\partial_1 u_2\|_{L^2_h L^\infty_v}
          + \|\zeta_2\|_{L^2_h L^\infty_v}\|\partial_2 u_2\|_{L^\infty_h L^2_v}\bigr)
          \|w\|_{L^2} \\
    &\le C\bigl(\|\zeta\|_{L^2}^{1/2}\|\partial_1\zeta\|_{L^2}^{1/2}+\|\zeta\|_{L^2}\bigr)
      \Bigl(\|\partial_1 u_2\|_{L^2}^{1/2}\|\partial_1\partial_2 u_2\|_{L^2}^{1/2}\\
      &\qquad+\|\partial_1 u_2\|_{L^2}+\|\partial_2 u_2\|_{L^2}^{1/2}\|\partial_1\partial_2 u_2\|_{L^2}^{1/2}
          +\|\partial_2 u_2\|_{L^2}\Bigr) \|w\|_{L^2}.
  \end{align*}
   For the typical product
  $\|\zeta\|^{1/2}\|\partial_1\zeta\|^{1/2}
   \times \|\partial_1 u_2\|^{1/2}\|\partial_1\partial_2 u_2\|^{1/2} \|w\|$,
  we write
  \begin{align*}
   &\|\zeta\|^{1/2}\|\partial_1\zeta\|^{1/2}
   \times \|\partial_1 u_2\|^{1/2}\|\partial_1\partial_2 u_2\|^{1/2} \|w\|\\
   &\qquad \le \tfrac12 \|\zeta\|_{L^2}\|\partial_1\zeta\|_{L^2}
    + \tfrac12 \|\partial_1 u_2\|_{L^2}\|\partial_1\partial_2 u_2\|_{L^2} \|w\|_{L^2}^2 .
  \end{align*}
  Repeating this splitting for all analogous products yields
  \begin{align}\label{est_I3}
    |I_3| &\le C_\eta \Bigl(1+\|\partial_1 u_2\|_{L^2}^2+\|\partial_2 u_2\|_{L^2}^2
        +\|\partial_1\partial_2 u_2\|_{L^2}^2\Bigr) \|w\|_{L^2}^2 \notag\\
    &\quad + C\Bigl(\|\zeta\|_{L^2}^2 + \|\zeta\|_{L^2}\|\partial_1\zeta\|_{L^2}
        + \|\partial_1\zeta\|_{L^2}^2 + \|\partial_1\partial_2\zeta\|_{L^2}^2\Bigr).
  \end{align}

  Choosing $\eta$ sufficiently small to absorb the $\|\partial_1 w\|_{L^2}^2$
  terms on the left-hand side of \eqref{eq:energy_w}, we deduce
  \begin{equation}\label{eq:diff_ineq}
    \frac{d}{dt}\|w\|_{L^2}^2 \le a(t) \|w\|_{L^2}^2 + b(t),
  \end{equation}
  where
  \begin{align*}
    a(t) &= C\Bigl(1+\sum_{i=1}^2\bigl(\|\partial_1 u_i\|_{L^2}^2
          +\|\partial_2 u_i\|_{L^2}^2+\|\partial_1\partial_2 u_i\|_{L^2}^2
          +\|u_i\|_{L^2}^2\bigr)\Bigr),\\[2mm]
    b(t) &= C\bigl(\|\zeta\|_{L^2}^2 + \|\partial_1\zeta\|_{L^2}^2
          + \|\partial_2\zeta\|_{L^2}^2 + \|\partial_1\partial_2\zeta\|_{L^2}^2\bigr).
  \end{align*}
  From the a priori estimates for the auxiliary equations, we know that
  \[
    \int_0^T a(s)ds \le C_R, \qquad
    \int_0^T b(s)ds \le C_R \|\zeta\|_G^2 .
  \]
  Applying Gronwall's lemma to \eqref{eq:diff_ineq} (with $w(0)=0$) gives
  \[
    \sup_{0\le t\le T}\|w(t)\|_{L^2}^2 \le e^{\int_0^T a(s)ds} \int_0^T b(s)ds
    \le C_R \|\zeta\|_G^2 .
  \]
  Finally,
  \[
    \|\mathcal F_x(z_1)-\mathcal F_x(z_2)\|_{C([0,T];H)}
    \le \|\zeta\|_{C([0,T];L^2)} + \|w\|_{C([0,T];L^2)}
    \le (C_R+1)\,\|\zeta\|_G,
  \]
  which completes the proof.
\end{proof}

\subsection{Uniform Freidlin--Wentzell LDP}
\label{sec:FW}
We now recall the uniform contraction principle established by Wang \cite{Wang2023},
which does not require exponential tightness.

\begin{theorem}[Uniform contraction principle]\label{th:Wang}
  Let $\Lambda$ be a non-empty set, $Y$ and $Z$ two separable Banach spaces.
  Let $\{\mu^\varepsilon\}_{\varepsilon>0}$ be a family of probability measures on
  $(Y,\mathcal B(Y))$ which satisfies a large deviation principle with good rate
  function $I:Y\to[0,\infty]$.
  For each $\lambda\in\Lambda$, let $T_\lambda : Y\to Z$ be a locally Lipschitz
  mapping in the sense that for every $R>0$ there exists $L_R>0$ such that for all
  $\lambda\in\Lambda$ and all $y_1,y_2\in Y$ with $\|y_1\|_Y,\|y_2\|_Y\le R$,
  \[
    \|T_\lambda(y_1)-T_\lambda(y_2)\|_Z \le L_R \|y_1-y_2\|_Y .
  \]
  Set $\nu_\lambda^\varepsilon = \mu^\varepsilon \circ (T_\lambda)^{-1}$.
  Then the family $\{\nu_\lambda^\varepsilon\}_{\varepsilon>0}$ satisfies a large
  deviation principle on $Z$ uniformly in $\lambda\in\Lambda$, with good rate
  function
  \[
    J_\lambda(z) = \inf \{ I(y) : T_\lambda(y)=z \}, \qquad z\in Z .
  \]
\end{theorem}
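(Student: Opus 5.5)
The plan is to verify directly the two halves of the uniform Freidlin--Wentzell (Dembo--Zeitouni) formulation. The first half is the uniform lower bound:
\[
\liminf_{\varepsilon\to0}\inf_{\lambda}\bigl(\varepsilon\log\nu^\varepsilon_\lambda(B(\phi,\delta))+J_\lambda(\phi)\bigr)\ge0
\]
for all $\phi$ with $J_\lambda(\phi)\le s_0$. The second half is the uniform upper bound:
\[
\limsup_{\varepsilon\to0}\sup_\lambda\bigl(\varepsilon\log\nu^\varepsilon_\lambda(\{d(\cdot,\Phi_\lambda(s))\ge\delta\})+s\bigr)\le0,
\]
where $\Phi_\lambda(s)=\{J_\lambda\le s\}$. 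As a preliminary step, I check that each $J_\lambda$ is a good rate function. Its level set is $\Phi_\lambda(s)=T_\lambda(\{I\le s\})$, the image of a compact set under a continuous map, hence compact. The infimum defining $J_\lambda$ is attained by lower semicontinuity of $I$ on the compact fibre intersected with the level set.

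\textbf{Lower bound.} Fix $\phi$ with $J_\lambda(\phi)\le s_0$ and choose $y_\lambda$ with $T_\lambda(y_\lambda)=\phi$ and $I(y_\lambda)=J_\lambda(\phi)$. All such $y_\lambda$ lie in the compact set $K=\{I\le s_0\}$, so $\|y_\lambda\|\le R_0$. Set $R=R_0+1$ and $\rho=\min(1,\delta/L_R)$. The Lipschitz bound gives the inclusion $B_Y(y_\lambda,\rho)\subset T_\lambda^{-1}(B_Z(\phi,\delta))$. Hence
\[
\nu^\varepsilon_\lambda(B(\phi,\delta))\ge\mu^\varepsilon(B(y_\lambda,\rho)).
\]
It remains to show $\varepsilon\log\mu^\varepsilon(B(y,\rho))\ge -I(y)-\gamma$ uniformly in $y\in K$. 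I cover $K$ by finitely many balls $B(y_j,\rho/2)$. For $y\in B(y_j,\rho/2)$ we have $B(y,\rho)\supset B(y_j,\rho/2)$, so the standard LDP lower bound applied at the finitely many $y_j$ gives the estimate. The loss $I(y)-I(y_j)$ is a concern, since $I$ is not continuous. I would handle it by invoking the LDP lower bound for the open ball $B(y,\rho)$ itself: $\inf_{B(y,\rho)}I\le I(y)$. The only non-uniform ingredient is then the threshold $\varepsilon_0$. For that I will use the standard result that a non-uniform LDP with good rate function implies this uniform ball estimate over compact sets of centres. This follows by contradiction: take sequences $y_n\to y$ and $\varepsilon_n\to0$, and use $B(y_n,\rho)\supset B(y,\rho/2)$ for large $n$, together with $I$ evaluated on $B(y,\rho/2)$. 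Here one must use semicontinuity carefully, replacing $\rho$ by a slightly smaller radius.

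\textbf{Upper bound.} Let $K_s=\{I\le s\}$, which is compact and hence contained in a ball of radius $R_0$, and set $R=R_0+1$. Choose $\rho\le\min(1,\delta/L_R)$. If $d_Y(y,K_s)<\rho$, pick $y'\in K_s$ with $\|y-y'\|<\rho$. Then $\|y\|,\|y'\|\le R$, so $\|T_\lambda y-T_\lambda y'\|<\delta$, with $T_\lambda y'\in\Phi_\lambda(s)$. Consequently
\[
\{d(T_\lambda y,\Phi_\lambda(s))\ge\delta\}\subset\{d_Y(y,K_s)\ge\rho\},
\]
and the right-hand side is a closed set independent of $\lambda$. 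The LDP upper bound gives $\limsup\varepsilon\log\mu^\varepsilon(\cdot)\le-\inf_{\{d(\cdot,K_s)\ge\rho\}}I$. This infimum is strictly greater than $s$: otherwise a minimizing sequence would lie in a compact level set $\{I\le s+1\}$ and, by lower semicontinuity, converge to a point of $K_s$ at distance at least $\rho$ from $K_s$, a contradiction. This yields the uniform upper bound with no exponential tightness needed.

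The main obstacle is the uniformity of the lower-bound threshold in $\varepsilon$ over the centres $y_\lambda$. The key point is that all minimizers lie in one compact level set of $I$, which is exactly where goodness of $I$ and the $\lambda$-uniform Lipschitz constant $L_R$ combine.
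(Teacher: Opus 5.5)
The paper gives no proof of this statement. It imports it as a black box from Wang \cite{Wang2023} and then applies it with $Y=G$, $Z=C([0,T];H)$, $T_x=\mathcal F_x$. Your argument is therefore a genuinely different, self-contained route, and it is essentially correct. It shows where each hypothesis enters:
\begin{itemize}
\item Goodness of $I$ confines every relevant point to a compact level set, hence to a fixed ball of radius $R$.
\item On that ball, the $\lambda$-uniform constant $L_R$ turns $\delta$-balls in $Z$ into $\rho$-balls in $Y$, with $\rho$ independent of $\lambda$.
\item The upper bound then reduces to the single $\lambda$-free closed set $\{d_Y(\cdot,K_s)\ge\rho\}$.
\item The lower bound reduces to a ball estimate for the one family $\{\mu^\varepsilon\}$, uniform over centres in a compact set.
\end{itemize}
This makes Remark~\ref{rmk:noET} concrete, since exponential tightness never enters. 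It also shows that equicontinuity of $\{T_\lambda\}$ on bounded sets, uniform in $\lambda$, would suffice in place of the Lipschitz bound. The citation route is shorter, but it leaves implicit which notion of uniformity is meant; your proof shows it is the Freidlin--Wentzell one.

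Three things to tidy up.

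(i) The ``loss $I(y)-I(y_j)$'' in your lower bound is illusory. Apply the LDP lower bound to the open balls $B(y_j,\rho/2)$, not at the points $y_j$. Choose $\varepsilon_j$ so that $\varepsilon\log\mu^\varepsilon(B(y_j,\rho/2))\ge-\inf_{B(y_j,\rho/2)}I-\gamma$ for $\varepsilon<\varepsilon_j$. If $y\in K\cap B(y_j,\rho/2)$, then $B(y,\rho)\supset B(y_j,\rho/2)$ and $\inf_{B(y_j,\rho/2)}I\le I(y)$. Hence $\varepsilon_0=\min_j\varepsilon_j$ gives $\mu^\varepsilon(B(y,\rho))\ge e^{-(I(y)+\gamma)/\varepsilon}$ for all $y\in K$ at once. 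No semicontinuity or shrinking of radii is needed. The same observation, $y_n\in B(y,\rho/2)$, is what actually closes your contradiction argument.

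(ii) The Freidlin--Wentzell upper bound requires the supremum over $s\in[0,s_0]$ as well as over $\lambda$. Take $R$ from $K_{s_0}$, so that $\rho$ does not depend on $s$. Note that $F_s=\{d_Y(\cdot,K_s)\ge\rho\}$ decreases in $s$. Then apply your fixed-$s$ bound at the nodes of a grid of mesh $<\gamma$.

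(iii) What you prove is the FW-uniform LDP, which is exactly how Theorem~\ref{th:FW} uses the result. Drop the parenthetical ``(Dembo--Zeitouni)'': for non-compact $\Lambda$ the DZ-uniform bounds can fail under these hypotheses. For example, take $Y=Z=\mathbb{R}$, $T_\lambda y=y+\lambda$, and $\mu^\varepsilon$ the law $N(0,\varepsilon)$ conditioned on $[-1/\varepsilon,1/\varepsilon]$, which still satisfies the LDP with $I(y)=y^2/2$. For $\varepsilon<1$ one has $\nu^\varepsilon_{2/\varepsilon}((-1,1))=0$, while $\inf_{(-1,1)}J_{2/\varepsilon}<\infty$. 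The paper upgrades to DZ-uniformity only over compact sets of initial data, via Lemma~\ref{lem:levelset} and \cite{Salins}.
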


In our setting we take
\begin{itemize}
  \item $Y = G = C([0,T];H)\cap L^2(0,T;\widetilde H^{1,1})$,
  \item $Z = C([0,T];H)$,
  \item $\Lambda = \mathcal K$, a non-empty bounded subset of $\widetilde H^{0,1}$,
  \item $T_x = \mathcal F_x$ for $x\in\Lambda$.
\end{itemize}
The LDP for the linear processes $\{z^\varepsilon\}$ in $Y$ has been established
in Theorem~\ref{th:linLDP} (with rate function $I_0$).  Proposition~\ref{prop:Lip}
shows that $\mathcal F_x$ is locally Lipschitz on $Y$, uniformly with respect to
$x$ in any bounded set.  Therefore, by Theorem~\ref{th:Wang} we obtain immediately
the following result.

\begin{theorem}[Freidlin--Wentzell uniform LDP]\label{th:FW}
  For every bounded set $\mathcal K\subset\widetilde H^{0,1}$, the solutions
  $\{u_x^\varepsilon\}$ of \eqref{eq:SNSintro} satisfy the FW-uniform LDP in
  $C([0,T];H)$ with good rate function
  \[
    I_x(u) = \inf\Bigl\{ I_0(z) : \mathcal F_x(z)=u \Bigr\}
    = \inf\Bigl\{ \frac12\int_0^T \|\varphi\|_H^2 dt : u = u_x^\varphi \Bigr\},
  \]
  uniformly for $x\in\mathcal K$.
\end{theorem}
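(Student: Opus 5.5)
The plan is to apply Theorem~\ref{th:Wang} directly with the choices listed before the statement, and to check each of its hypotheses in turn.

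\textbf{Hypotheses on the spaces.} First we check that $Y=G$ and $Z=C([0,T];H)$ are separable Banach spaces. Separability of $G$ follows by embedding it isometrically into $C([0,T];H)\times L^2(0,T;\widetilde H^{1,1})$, both of which are separable.

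\textbf{Linear LDP.} The family $\mu^\varepsilon=\mathrm{Law}(z^\varepsilon)$ satisfies the LDP in $G$ with good rate function $I_0$ by Theorem~\ref{th:linLDP}.

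\textbf{Uniform local Lipschitz property.} The maps $T_x=\mathcal F_x$ are locally Lipschitz uniformly in $x\in\mathcal K$ by Proposition~\ref{prop:Lip}. We take $R\ge\sup_{x\in\mathcal K}\|x\|_{\widetilde H^{0,1}}$, which is possible since $\mathcal K$ is bounded. Then the constant $L_R$ there works for all $\lambda=x\in\mathcal K$ at once.

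\textbf{Identification of the pushforward.} Next we identify $\nu^\varepsilon_x=\mu^\varepsilon\circ\mathcal F_x^{-1}$ with the law of $u^\varepsilon_x$. This uses the pathwise decomposition $u^\varepsilon_x=z^\varepsilon+v$, where $v$ solves \eqref{eq:aux_v} with $z=z^\varepsilon$. For this we need $z^\varepsilon\in G$ almost surely, which follows from \eqref{energyG} with $v^\varepsilon=0$. We also need uniqueness for \eqref{eq:aux_v}, so that $\mathcal F_x(z^\varepsilon)$ coincides with the (unique) solution of \eqref{eq:SNSintro}, as remarked at the end of Section~\ref{sec:map}. Measurability of $\mathcal F_x$ is automatic from its continuity.

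\textbf{Conclusion and form of the rate function.} Theorem~\ref{th:Wang} then yields the uniform LDP with rate function $J_x(u)=\inf\{I_0(z):\mathcal F_x(z)=u\}$, which is the first expression for $I_x$. It remains to prove the second equality. Substituting the definition of $I_0$ gives
\[
J_x(u)=\tfrac12\inf\Bigl\{\int_0^T\|\varphi\|_H^2\,dt:\ \mathcal F_x(z^\varphi)=u\Bigr\}.
\]
We must check that $\mathcal F_x(z^\varphi)=u^\varphi_x$, the solution of the controlled skeleton equation
\[
\partial_t u+Au+B(u)=Q^{1/2}\varphi,\qquad u(0)=x.
\]
Setting $u=z^\varphi+v$, the function $v$ solves \eqref{eq:aux_v} with $z=z^\varphi$. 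By uniqueness of solutions of the skeleton equation in the class $L^\infty(\widetilde H^{0,1})\cap L^2(\widetilde H^{1,1})+G$, the condition $\mathcal F_x(z^\varphi)=u$ is equivalent to $u=u_x^\varphi$. The infima then coincide.

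\textbf{Main obstacle.} The delicate point is this identification step together with the a.s.\ equality $u^\varepsilon_x=\mathcal F_x(z^\varepsilon)$. Both rely on well-posedness of \eqref{eq:aux_v} for an arbitrary $z\in G$, which was only sketched via \cite{LZZ}. I would handle it by citing that construction and noting that uniqueness follows from the difference estimate already carried out in the proof of Proposition~\ref{prop:Lip}, with $\zeta=0$, which forces $w\equiv0$.
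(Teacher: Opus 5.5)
Your proposal is correct and is essentially the paper's argument. The paper also obtains Theorem~\ref{th:FW} by applying Theorem~\ref{th:Wang} directly with $Y=G$, $Z=C([0,T];H)$, $\Lambda=\mathcal K$ and $T_x=\mathcal F_x$, using the linear LDP of Theorem~\ref{th:linLDP} and the uniform local Lipschitz bound of Proposition~\ref{prop:Lip}; your additional checks (separability of $G$, identification of $\mu^\varepsilon\circ\mathcal F_x^{-1}$ with the law of $u^\varepsilon_x$, and identification of $\mathcal F_x(z^\varphi)$ with $u^\varphi_x$ for the second form of $I_x$) just make explicit what the paper leaves implicit, and, exactly as in the paper, they rely on the well-posedness of \eqref{eq:aux_v} for arbitrary $z\in G$ asserted in Section~\ref{sec:map}.
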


\begin{remark}[Why exponential tightness is not needed]\label{rmk:noET}
  The uniform contraction principle of Cerrai-Paskal
  \cite[Theorem~3.3]{CP} requires the linear process to be \emph{exponentially
  tight} in an auxiliary space $G$, in addition to being locally Lipschitz
  on bounded subsets of $G$ and satisfying an LDP in a
  trajectory space $F\subset G$.  The reason is that the mapping $T$
  that sends the linear orbit to the nonlinear solution is only locally
  Lipschitz from $G$, while the LDP is established in $F$.
  Exponential tightness in $G$ guarantees that, with exponentially small
  probability, the linear trajectories stay inside bounded sets of $G$,
  where the Lipschitz estimate can be safely applied.  Removing
  exponential tightness would leave the possibility that the linear
  process escapes to regions where no Lipschitz control is available,
  destroying the uniform upper bound.

  In this paper, Proposition~\ref{prop:Lip}
  shows that $\mathcal F_x : G \to C([0,T];H)$ is locally Lipschitz
  \emph{on the whole space $G$} (and not only on a subset of it), and
  Theorem~\ref{th:linLDP} provides an LDP for the linear process in $G$
  with a good rate function.  Since any good rate function on a Polish
  space automatically yields exponential tightness, the family
  $\{z^\varepsilon\}$ is exponentially tight in $G$ without any extra
  work.
\end{remark}

\subsection{Uniform Dembo-Zeitouni LDP}
\label{sec:DZ}

In order to obtain the DZ-uniform LDP on compact sets, we need the continuity
of the level sets of the rate function with respect to the initial data.

\begin{lemma}[Hausdorff continuity of the level sets]\label{lem:levelset}
  For any $s\ge 0$, the map $x\mapsto I_x^s = \{u\in C([0,T];H) : I_x(u)\le s\}$
  is continuous with respect to the Hausdorff metric in $C([0,T];H)$, uniformly
  for $x$ in any bounded subset of $\widetilde H^{0,1}$.
\end{lemma}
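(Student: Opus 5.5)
The plan is to write the level set as an image of a fixed compact set under the solution map, and then show that this map depends Lipschitz-continuously on the initial datum, uniformly on the compact set. By Theorem~\ref{th:FW}, $I_x(u)\le s$ holds exactly when $u=\mathcal F_x(z)$ for some $z$ with $I_0(z)\le s$. Since $I_0$ is a good rate function (Theorem~\ref{th:linLDP}), the infimum is attained. Hence
\[
  I_x^s = \mathcal F_x(K^s),\qquad K^s:=\{z\in G: I_0(z)\le s\},
\]
and $K^s$ does not depend on $x$. By Lemma~\ref{lem:compact}, $K^s = K_{2s}$ is compact in $G$, so it is bounded: $\|z\|_G\le R_s$ for all $z\in K^s$. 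The Hausdorff distance between $I_x^s$ and $I_y^s$ is then at most
\[
  \sup_{z\in K^s}\|\mathcal F_x(z)-\mathcal F_y(z)\|_{C([0,T];H)},
\]
because each point $\mathcal F_x(z)$ of one set is matched with the point $\mathcal F_y(z)$ of the other. It therefore suffices to prove a Lipschitz bound in $x$,
\[
  \sup_{0\le t\le T}\|\mathcal F_x(z)(t)-\mathcal F_y(z)(t)\|_{L^2}\le C_{R}\,\|x-y\|_{L^2},
\]
for $\|z\|_G\le R$ and $\|x\|_{\widetilde H^{0,1}},\|y\|_{\widetilde H^{0,1}}\le R$. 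This gives continuity of $x\mapsto I_x^s$, and in fact a uniform Lipschitz modulus on bounded sets.

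To prove the bound, let $v_x,v_y$ solve \eqref{eq:aux_v} with the same $z$. Set $w=v_x-v_y$, so that $\delta=u_x-u_y=w$ and $w(0)=x-y$. The difference equation is \eqref{eq:diff_w} with $\zeta=0$, so in the energy identity \eqref{eq:energy_w} the terms $I_1$ and $I_3$ vanish. Only $I_2=-\langle (w\cdot\nabla)u_y,w\rangle$ remains. It is estimated exactly as in \eqref{est_I2}, using Lemma~\ref{lem:GN} and $\partial_2 w_2=-\partial_1 w_1$, and gives
\[
  \frac{d}{dt}\|w\|_{L^2}^2\le a(t)\|w\|_{L^2}^2 ,
\]
where $a$ is as in the proof of Proposition~\ref{prop:Lip}. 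Gronwall's lemma then yields $\sup_t\|w(t)\|^2\le e^{\int_0^T a}\|x-y\|^2$.

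The main obstacle is the bound $\int_0^T a\,dt\le C_R$, uniformly over $\|x\|_{\widetilde H^{0,1}}\le R$ and $\|z\|_G\le R$. This requires the quantitative form of the anisotropic a priori estimate for \eqref{eq:aux_v}, namely
\[
  \sup_t\|v\|_{\widetilde H^{0,1}}^2+\int_0^T\|v\|_{\widetilde H^{1,1}}^2\,dt\le C(R),
\]
and not merely the existence of a solution. I would derive it by testing \eqref{eq:aux_v} against $v$ and against $-\partial_2^2 v$. The vertical transport term $\langle (v\cdot\nabla)\partial_2 v,\partial_2 v\rangle$ vanishes by incompressibility. The remaining terms are controlled through Lemma~\ref{lem:GN}, the identity $\partial_2 v_2=-\partial_1 v_1$, and the bound $z\in L^2(0,T;\widetilde H^{1,1})$, exactly as in \cite{LZZ}. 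The required $\widetilde H^{1,1}$ control on $u=v+z$ then follows from the corresponding bounds on $v$ and $z$. If this estimate is taken as given (it is already used implicitly in Proposition~\ref{prop:Lip}), the lemma follows.
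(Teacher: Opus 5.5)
Your argument is essentially the paper's. The paper fixes a control $\varphi$ with $u=u_x^\varphi$, compares it with $u_y^\varphi$ driven by the same $\varphi$, and obtains $\|u_x^\varphi-u_y^\varphi\|_{C([0,T];H)}\le C(R)\|x-y\|_{L^2}$ by rerunning the estimates of Proposition~\ref{prop:Lip} with $\zeta=0$. Since $u_x^\varphi=\mathcal F_x(z^\varphi)$, your description $I_x^s=\mathcal F_x(K^s)$, with the point $\mathcal F_x(z)$ matched to $\mathcal F_y(z)$, is the same matching repackaged.

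The part you propose to prove yourself fails as written. You want a bound on $\int_0^T a\,dt$ uniform over all $z$ with $\|z\|_G\le R$, obtained by testing \eqref{eq:aux_v} against $-\partial_2^2 v$. Expand $\partial_2[(u\cdot\nabla)u]$ with $u=v+z$. Besides the vanishing term $\langle (u\cdot\nabla)\partial_2 v,\partial_2 v\rangle$ and the terms $\langle(\partial_2 u\cdot\nabla)u,\partial_2 v\rangle$ of the type treated in \cite{LZZ}, you also get $\langle (u\cdot\nabla)\partial_2 z,\partial_2 v\rangle$. This contains $\int u_2\,\partial_2^2 z\cdot\partial_2 v\,dx$. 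The $G$-norm does not control $\partial_2^2 z$. Moving the derivative, by integration by parts or antisymmetry, produces $\partial_2^2 v$ instead. That is not controlled either, because $A$ provides no vertical dissipation. So Lemma~\ref{lem:GN} together with $z\in L^2(0,T;\widetilde H^{1,1})$ does not close this estimate for a general $z\in G$.

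You do not need the bound for general $z$. Every $z\in K^s$ equals $z^\varphi$ with $\int_0^T\|\varphi\|_H^2\,dt\le 2s$, and $\mathcal F_x(z^\varphi)=u_x^\varphi$ solves \eqref{eq:ske_nonlin}. Test that equation directly against $-\partial_2^2 u_x^\varphi$:
\begin{itemize}
\item no $\partial_2^2 z$ term appears;
\item the forcing contributes $\langle\partial_2 Q^{1/2}\varphi,\partial_2 u_x^\varphi\rangle\le C\|\varphi\|_H\|\partial_2 u_x^\varphi\|$ by \eqref{eq:Qreg};
\item the only nonlinear term left is $\langle(\partial_2 u\cdot\nabla)u,\partial_2 u\rangle$, which is the one handled in \cite{LZZ}.
\end{itemize}
This gives $\int_0^T a\,dt\le C(R,s)$, which is what the paper means by ``the a priori bounds of the skeleton solutions''. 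With this replacement, restricting to $z\in K^s$ rather than all $\|z\|_G\le R$, your Gronwall argument and the Hausdorff bound $d_H(I_x^s,I_y^s)\le C(R,s)\|x-y\|_{\widetilde H^{0,1}}$ go through.
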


\begin{proof}
  Let $x,y\in\widetilde H^{0,1}$ with $\|x\|_{\widetilde H^{0,1}},\|y\|_{\widetilde H^{0,1}}\le R$.
  For any $u\in I_x^s$, by definition of the rate function $I_x$, there exists a
  control $\varphi\in L^2(0,T;H)$ such that $u = u_x^\varphi$ and
  $\frac12\int_0^T\|\varphi(t)\|_H^2\,dt \le s$. Here $u_x^\varphi$ denotes the
  unique solution of the deterministic nonlinear controlled (skeleton) equation
  \begin{equation}\label{eq:ske_nonlin}
    \partial_t u_x^\varphi + A u_x^\varphi + \mathbb P\bigl((u_x^\varphi\cdot\nabla)u_x^\varphi\bigr)
    = Q^{1/2}\varphi,\qquad u_x^\varphi(0)=x .
  \end{equation}

  Let $u_y^\varphi$ be the solution of \eqref{eq:ske_nonlin} with initial data $y$
  and the \emph{same} control $\varphi$. Their difference
  $\bar w := u_x^\varphi - u_y^\varphi$ satisfies
  \[
    \partial_t \bar w + A \bar w + \mathbb P\bigl[(u_x^\varphi\cdot\nabla)\bar w
    + (\bar w\cdot\nabla)u_y^\varphi\bigr] = 0,\qquad \bar w(0)=x-y .
  \]
  This equation is structurally identical to the one studied in
  Proposition~\ref{prop:Lip} (with the same $z$ in both $u$'s and without the
  $\zeta$ term). Repeating the energy estimates of that proposition verbatim
  yields the Lipschitz dependence on the initial data:
  \[
    \|u_x^\varphi - u_y^\varphi\|_{C([0,T];H)} \le C(R) \|x-y\|_{L^2},
  \]
  where the constant $C(R)$ depends only on $R$ and on the a priori bounds of
  the skeleton solutions.
  Consequently,
  \[
    \operatorname{dist}_{C([0,T];H)}(u, I_y^s)
    \le \|u_x^\varphi - u_y^\varphi\|_{C([0,T];H)}
    \le C(R) \|x-y\|_{\widetilde H^{0,1}} .
  \]
  By symmetry, the same estimate holds with $x$ and $y$ interchanged. Hence
  \[
    d_H(I_x^s, I_y^s) \le C(R) \|x-y\|_{\widetilde H^{0,1}},
  \]
  which implies the claimed continuity.
\end{proof}

With Lemma~\ref{lem:levelset} and Theorem~\ref{th:FW} in hand, we can apply the
equivalence result between FW- and DZ-uniform LDP on compact parameter sets
established by Salins \cite{Salins}.

\begin{theorem}[Uniform Dembo-Zeitouni LDP]\label{th:mainLDP}
  Let $K$ be any compact subset of $\widetilde H^{0,1}$.
  The family $\{ u_x^\varepsilon \}_{\varepsilon>0}$ satisfies the Dembo--Zeitouni
  uniform large deviation principle in $C([0,T];H)$ with good rate function
  \[
    I(u) = \frac12 \inf\Bigl\{ \int_0^T \|\varphi(s)\|_H^2\,ds :
      u = u_x^\varphi \Bigr\},
  \]
  uniformly for $x\in K$.
\end{theorem}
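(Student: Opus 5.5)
The plan is to deduce Theorem~\ref{th:mainLDP} from Theorem~\ref{th:FW} via Salins' equivalence theorem \cite{Salins}. That theorem says that, on a compact parameter set $K$, a Freidlin--Wentzell uniform LDP with good rate functions $I_x$ upgrades to a Dembo--Zeitouni uniform LDP, provided the level-set map $x\mapsto I_x^s$ is continuous in the Hausdorff metric.

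\textbf{Step 1 (FW-uniform LDP on $K$).} Since $K\subset\widetilde H^{0,1}$ is compact, it is bounded. Theorem~\ref{th:FW} therefore applies with $\mathcal K=K$ and gives the FW-uniform LDP in $C([0,T];H)$ with rate functions $I_x$, uniformly in $x\in K$. Goodness of each $I_x$ follows from the contraction principle: $I_0$ is good on $G$ by Lemma~\ref{lem:compact}, and $\mathcal F_x$ is continuous by Proposition~\ref{prop:Lip}. Concretely, $I_x^s=\mathcal F_x(\{I_0\le s\})$ is the continuous image of a compact set, hence compact.

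\textbf{Step 2 (Hausdorff continuity of level sets).} Lemma~\ref{lem:levelset} gives $d_H(I_x^s,I_y^s)\le C(R)\|x-y\|_{\widetilde H^{0,1}}$, uniformly in $s\le s_0$. This is exactly Salins' continuity hypothesis. I would check that the constant $C(R)$ does not depend on $s$ in a way that breaks the argument. It may grow with $s$ through the a priori bounds on the skeleton solutions, but Salins only needs continuity for each fixed $s$, or locally uniformly in $s$, so this is harmless.

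\textbf{Step 3 (Salins' theorem).} With the FW upper and lower bounds on $K$ compact and continuity of $x\mapsto I_x^s$ in hand, Salins' theorem yields the DZ-uniform bounds. For closed $F$ and every compact $K$,
\[
\limsup_{\varepsilon\to0}\varepsilon\log\sup_{x\in K}\PP(u^\varepsilon_x\in F)\le-\inf_{x\in K,\,u\in F}I_x(u),
\]
and the analogous liminf bound holds for open sets with $\sup_{x\in K}\inf_{u\in G'}I_x(u)$.

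\textbf{Step 4 (rate function).} It remains to identify the rate function written in the statement as the $x$-indexed family $I_x$. This is just the representation $I_x(u)=\inf\{\frac12\int_0^T\|\varphi\|_H^2:u=u_x^\varphi\}$ from Theorem~\ref{th:FW}.

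I expect the main obstacle to be checking the precise hypotheses of Salins' equivalence. In particular, it likely requires the level sets $\bigcup_{x\in K}I_x^s$ to be compact. This should follow from Steps 1 and 2: take a sequence $u_n\in I_{x_n}^s$ and a subsequence with $x_n\to x$. Then $\operatorname{dist}(u_n,I_x^s)\to0$, and since $I_x^s$ is compact, $u_n$ has a convergent subsequence whose limit lies in $I_x^s$.
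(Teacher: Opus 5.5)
Your proposal is correct and follows the paper's own argument. The paper likewise combines the FW-uniform LDP of Theorem~\ref{th:FW} (applied to the bounded set $K$), the Hausdorff continuity of level sets from Lemma~\ref{lem:levelset}, and Salins' equivalence theorem \cite{Salins}; your extra checks that $\bigcup_{x\in K} I_x^s$ is compact and that the $s$-dependence of $C(R)$ is harmless are details the paper leaves implicit, and you handle them correctly.
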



\section{Failure of Exponential Mixing}\label{sec:mixing}
Due to Lemma~\ref{lem:kernel}, the purely vertical shear modes of the form
$u(x_1,x_2)=(f(x_2),0)$, which constitute the infinite-dimensional kernel
$\mathcal N$ of the horizontal Stokes operator, are completely untouched by
horizontal dissipation. Adjacent fluid layers slide past each other without producing
horizontal velocity gradients, and therefore no horizontal friction acts on
them. This is the fundamental reason why the system cannot forget its initial
conditions exponentially fast, as the following theorem makes precise.

\begin{theorem}[Failure of exponential mixing]\label{th:noExpMix}
  For the anisotropic Navier-Stokes equations \eqref{eq:SNSintro} with additive
  noise, there do not exist constants $C,\gamma>0$ independent of
  $\varepsilon$ such that for all initial data $x,y\in\widetilde H^{0,1}$
  and all $t\ge 0$,
  \begin{align}\label{eq:assumed_mixing}
    \mathbb{E}\|u^\varepsilon(t;x) - u^\varepsilon(t;y)\|_{L^2}^2
    \le C e^{-\gamma t} \|x-y\|_{L^2}^2 .
  \end{align}
\end{theorem}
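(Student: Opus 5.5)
We argue by contradiction and compare the stochastic dynamics with the deterministic steady states supplied by Lemma~\ref{lem:kernel}. Suppose constants $C,\gamma>0$, independent of $\varepsilon$, satisfy \eqref{eq:assumed_mixing}. We take $x=u_0=(\sin x_2,0)$ and $y=0$. Both lie in $\N\cap\widetilde H^{0,1}$, so $Ax=Ay=0$. By Lemma~\ref{lem:kernel}(3) also $B(x)=B(y)=0$. Hence both are stationary solutions of the deterministic equation \eqref{eq:NSintro}, and their distance $\|x-y\|_{L^2}=\|u_0\|_{L^2}=\sqrt2\,\pi$ never decays. The idea is to fix a time $t$ with $Ce^{-\gamma t}\le 1/8$. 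We then choose $\varepsilon$ so small that both stochastic solutions remain close, in mean square at time $t$, to these steady states. This contradicts \eqref{eq:assumed_mixing}.

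\textbf{Step 1 (deviation from a steady state).} For $x=(f(x_2),0)\in\N$ with $f$ smooth, we set $d^\varepsilon:=u^\varepsilon(\cdot;x)-x$. Using $Ax=0$ and $B(x)=0$, it satisfies
\[
 d\,d^\varepsilon + A d^\varepsilon\,dt + \mathbb P\bigl[(x\cdot\nabla)d^\varepsilon+(d^\varepsilon\cdot\nabla)x+(d^\varepsilon\cdot\nabla)d^\varepsilon\bigr]dt=\sqrt\varepsilon\,Q^{1/2}dW,\qquad d^\varepsilon(0)=0 .
\]
We apply It\^o's formula to $\|d^\varepsilon\|_{L^2}^2$. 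The transport terms $\langle (x\cdot\nabla)d,d\rangle$ and $\langle (d\cdot\nabla)d,d\rangle$ vanish by the divergence-free condition. The remaining term is explicit:
\[
\langle (d\cdot\nabla)x,d\rangle=\int_{\T^2} d_2\,f'(x_2)\,d_1\,dx ,
\]
so its absolute value is at most $\|f'\|_{L^\infty}\|d\|_{L^2}^2$. Dropping the dissipation $2\|\partial_1 d\|^2$ and taking expectations gives
\[
\tfrac{d}{dt}\mathbb E\|d^\varepsilon\|^2\le 2\|f'\|_\infty\,\mathbb E\|d^\varepsilon\|^2+\varepsilon\,\mathrm{Tr}\,Q .
\]
Gronwall's lemma then yields $\mathbb E\|d^\varepsilon(t)\|^2\le \varepsilon\,\mathrm{Tr}(Q)\,t\,e^{2\|f'\|_\infty t}$. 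For $f=\sin$ we have $\|f'\|_\infty=1$, and for $y=0$ we have $f=0$.

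\textbf{Step 2 (contradiction).} We write $u^\varepsilon(t;x)-u^\varepsilon(t;y)=(x-y)+d^\varepsilon_x(t)-d^\varepsilon_y(t)$. The inequality $|a+b|^2\ge \tfrac12|a|^2-|b|^2$ together with Step~1 gives
\[
\mathbb E\|u^\varepsilon(t;x)-u^\varepsilon(t;y)\|^2\ \ge\ \tfrac12\|x-y\|^2-4\varepsilon\,\mathrm{Tr}(Q)\,t\,e^{2t}.
\]
Combining this with \eqref{eq:assumed_mixing} at our fixed $t$ yields $\tfrac12\|x-y\|^2\le \tfrac18\|x-y\|^2+4\varepsilon\,\mathrm{Tr}(Q)\,t\,e^{2t}$. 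This is false once $\varepsilon<\|x-y\|^2/(16\,\mathrm{Tr}(Q)\,t\,e^{2t})$, which proves the theorem. Since $C,\gamma$ were assumed independent of $\varepsilon$, such a choice of $\varepsilon$ is admissible. Alternatively, the uniform LDP (Theorem~\ref{th:FW}) yields $u^\varepsilon(t;x)\to x$ in probability, but the direct energy estimate is simpler and also provides the needed mean-square control.

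\textbf{Main obstacle.} The delicate point is justifying It\^o's formula for $\|d^\varepsilon\|_{L^2}^2$ and the cancellations $\langle (u\cdot\nabla)d,d\rangle=0$ at the regularity available for the anisotropic solution. We only have $u^\varepsilon\in L^\infty(0,T;\widetilde H^{0,1})\cap L^2(0,T;\widetilde H^{1,1})$ pathwise, via $u^\varepsilon=\mathcal F_x(z^\varepsilon)$. I would carry out the computation on Galerkin approximations, where the cancellations are exact and It\^o's formula is finite-dimensional. Since the resulting bound is uniform in the Galerkin level, one then passes to the limit using lower semicontinuity of the $L^2$ norm and the convergence of the approximations in $C([0,T];H)$. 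Alternatively, one can split $u^\varepsilon=v+z^\varepsilon$, apply the deterministic energy identity to $v$ and It\^o's formula to the linear Ornstein--Uhlenbeck part $z^\varepsilon$, and recombine.
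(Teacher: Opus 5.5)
Your proof is correct, but it gets the key smallness estimate by a different route than the paper. The paper never writes an equation for $u^\varepsilon-x_0$. Instead it represents $u^\varepsilon(\cdot;x_0)=\mathcal F_{x_0}(z^\varepsilon)$ and uses $\mathbb E\|z^\varepsilon\|_G^2\le C\varepsilon\,\mathrm{Tr}(Q)$. On the event $\{\|z^\varepsilon\|_G\le R\}$ it then applies the local Lipschitz bound of Proposition~\ref{prop:Lip}, which gives $u^\varepsilon(\cdot;x_0)\to\mathcal F_{x_0}(0)\equiv x_0$ in probability in $C([0,T];H)$, and likewise for $y_0=0$. Finally it turns convergence in probability into the second-moment lower bound $\frac18\|x_0\|_{L^2}^2$ via $\mathbb E X^2\ge a^2\,\mathbb P(X\ge a)$.

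You instead apply It\^o's formula directly to the deviation $d^\varepsilon=u^\varepsilon-x$ from the steady state. You observe that the only surviving trilinear term is $\int d_2 f'(x_2)d_1$, which is controlled by $\|f'\|_{L^\infty}$. This yields the explicit bound $\mathbb E\|d^\varepsilon(t)\|^2\le\varepsilon\,\mathrm{Tr}(Q)\,t\,e^{2t}$ and an explicit threshold on $\varepsilon$.

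\emph{What each route buys.} Your argument is self-contained and does not depend on Proposition~\ref{prop:Lip} or the anisotropic Gagliardo--Nirenberg estimates. It also shows quantitatively that the non-decay persists up to times of order $\log(1/\varepsilon)$. The paper's route reuses the LDP machinery and gives closeness uniformly on $[0,T]$. It would also work for any steady state in $\N\cap\widetilde H^{0,1}$ without requiring $\nabla x\in L^\infty$. However, it only provides a non-explicit $\varepsilon_0(T)$.

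Two minor remarks. First, the mean-square upper control you stress is not actually needed: convergence in probability already suffices for the lower bound, as the paper shows. Second, the obstacle you flag is milder than you suggest. Section~\ref{sec:map} shows $u^\varepsilon\in L^2(0,T;H^1)$ and $B(u^\varepsilon)\in L^2(0,T;H^{-1})$, so the standard variational It\^o formula applies in the Gelfand triple built on $H^1\cap H$. A stopping-time localisation then removes the martingale term before you apply Gronwall.
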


\begin{proof}
  \noindent\textbf{Step 1. Deterministic obstruction.}
  Take the two initial data
  \[
    x_0 = \begin{pmatrix} \sin x_2 \\ 0 \end{pmatrix}, \qquad y_0 = 0,
  \]
  both belonging to $\widetilde H^{0,1}$.  From Lemma~\ref{lem:kernel} we know
  that $x_0\in\ker A$ and $B(x_0)=0$.  Hence for $\varepsilon=0$,
  \[
    u^0(t;x_0) \equiv x_0,\qquad u^0(t;y_0) \equiv 0,\qquad \forall t\ge 0 .
  \]
  The difference has constant positive norm:
  \[
    \|u^0(t;x_0) - u^0(t;0)\|_{L^2} = \|x_0\|_{L^2} = \sqrt{2}\pi > 0,
    \qquad \forall t\ge 0 .
  \]
  Thus the deterministic equation does not exhibit exponential decay of
  the difference between these two trajectories.

  \medskip
\noindent\textbf{Step 2. Contradiction argument for the stochastic equation.}
First, for the linear process $z^\varepsilon$ of
\eqref{eq:lin}, applying It\^o's formula to $\|z^\varepsilon(t)\|_{L^2}^2$ and
$\|\partial_2 z^\varepsilon(t)\|_{L^2}^2$ exactly as in the proof of
Lemma~\ref{lem:energy} (with the source term $Q^{1/2}\varphi$ replaced by
the stochastic differential $\sqrt\varepsilon Q^{1/2}dW$), and using the
Burkholder-Davis-Gundy inequality, we obtain
the estimate
\[
  \mathbb E\Bigl[ \|z^\varepsilon\|_G^2 \Bigr]
  = \mathbb E\Bigl[ \sup_{0\le t\le T}\|z^\varepsilon(t)\|_{L^2}^2
    + \int_0^T \|z^\varepsilon(t)\|_{\widetilde H^{1,1}}^2\,dt \Bigr]
  \le C\varepsilon\,\mathrm{Tr}(Q) \xrightarrow{\varepsilon\to0} 0 .
\]
Hence $z^\varepsilon\to0$ in $L^2(\Omega;G)$, and therefore also in probability
in $G$.

Now suppose that there exist constants $C,\gamma>0$ independent of
$\varepsilon\in(0,1]$ such that \eqref{eq:assumed_mixing} holds for all
$x,y\in\widetilde H^{0,1}$ and all $t\ge0$.  Now, let $T>0$, by Proposition~\ref{prop:Lip}, the solution map
$\mathcal F_{x_0}: G \to C([0,T];L^2)$ is locally Lipschitz, hence continuous. Because
$z^\varepsilon\to0$ in probability, it implies tightness of the family
$\{\|z^\varepsilon\|_G\}$.
For any $\eta>0$, we can choose $R>0$ such that
$\mathbb{P}(\|z^\varepsilon\|_G > R) \le \eta/2$.  On the event $\{\|z^\varepsilon\|_G \le R\}$, the
Lipschitz estimate gives
\[
\|u^\varepsilon(\cdot;x_0) - \mathcal F_{x_0}(0)\|_{C([0,T];L^2)}
= \|\mathcal F_{x_0}(z^\varepsilon) - \mathcal F_{x_0}(0)\|_{C([0,T];L^2)}
\le L_R \|z^\varepsilon\|_G .
\]
Since the right-hand side converges to $0$
in probability, it shows that
$u^\varepsilon(\cdot;x_0) \to \mathcal F_{x_0}(0)$ in probability in
$C([0,T];L^2)$.  But $\mathcal F_{x_0}(0)$ is precisely the solution of the
deterministic equation with $\varepsilon=0$ and initial data $x_0$.  As
observed in Step~1, $x_0$ is a steady state of the deterministic dynamics,
so $\mathcal F_{x_0}(0) \equiv x_0$.  The same reasoning applied to $y_0=0$
yields $u^\varepsilon(\cdot;0) \to 0$ in probability.
The difference at time $t=T$ satisfies
\[
\|u^\varepsilon(T;x_0)-u^\varepsilon(T;0)\|_{L^2}
\;\xrightarrow{\varepsilon\to0}\;
\|x_0\|_{L^2} \quad\text{in probability}.
\]
Consequently, for any $\delta>0$, there exists \(\varepsilon_0=\varepsilon_0(\delta)\) such that for all
\(\varepsilon\le\varepsilon_0\),
\[
  \mathbb P\Bigl( \bigl| \|u^\varepsilon(T;x_0)-u^\varepsilon(T;0)\|_{L^2} - \|x_0\|_{L^2} \bigr| < \delta \Bigr) > \frac12 .
\]
Taking \(\delta = \frac12\|x_0\|_{L^2}\) and using the reverse triangle inequality, we obtain
\begin{align*}
  \|u^\varepsilon(T;x_0)-u^\varepsilon(T;0)\|_{L^2}
  &\ge \|x_0\|_{L^2} - \bigl| \|u^\varepsilon(T;x_0)-u^\varepsilon(T;0)\|_{L^2} - \|x_0\|_{L^2} \bigr|\\
  &> \|x_0\|_{L^2} - \frac12\|x_0\|_{L^2}\\
  &= \frac12\|x_0\|_{L^2}.
\end{align*}
 Hence,
\[
  \mathbb P\Bigl( \|u^\varepsilon(T;x_0)-u^\varepsilon(T;0)\|_{L^2}
  \ge \frac12\|x_0\|_{L^2} \Bigr)
  \ge\frac12 .
\]
By Markov's inequality,
\[
  \mathbb E\|u^\varepsilon(T;x_0)-u^\varepsilon(T;0)\|_{L^2}^2
  \ge \frac14\|x_0\|_{L^2}^2 \cdot \frac12
  = \frac18\|x_0\|_{L^2}^2 .
\]
On the other hand, the assumed exponential estimate
\eqref{eq:assumed_mixing} would imply
\[
  \mathbb E\|u^\varepsilon(T;x_0)-u^\varepsilon(T;0)\|_{L^2}^2
  \le C e^{-\gamma T} \|x_0\|_{L^2}^2 .
\]
Choosing $T$ sufficiently large so that $C e^{-\gamma T} < \frac18$, we
obtain a contradiction.  Thus no such uniform constants $C,\gamma$ can
exist.
\end{proof}

\begin{remark}[Role of the boundary conditions]\label{rmk:boundary}
  The failure of exponential mixing established in Theorem~\ref{th:noExpMix}
  relies crucially on the periodic boundary conditions.  On a Dirichlet  boundary condition,
  the fixed side walls enforce a no-slip condition, i.e.,
   the fluid velocity is zero at the boundaries.  A purely
  vertical shear flow of the form $(\sin x_2,0)$ would require the fluid to
  slip freely along the lateral walls, which is prohibited.  As a consequence,
  every non-trivial velocity field must necessarily develop horizontal
  variations, thereby activating the horizontal viscous dissipation.
  Mathematically, this is reflected by the anisotropic Poincar\'{e} inequality
  $\|u\|_{L^2}\le C\|\partial_1 u\|_{L^2}$ which holds for every
  divergence-free field vanishing on the boundary.
  With this estimate, the horizontal dissipation provides the
  existence of a unique invariant measure, as proved by
  Sun-Qiu-Tang~\cite{SPL}.
\end{remark}

%

\section{On Invariant Measures}\label{sec:invmeas}
 In Section~\ref{sec:mixing},
 we have shown that the anisotropic
Navier-Stokes equations on the torus cannot be exponentially mixing in any
uniform sense with respect to the noise intensity.
The present section therefore examines what can be said about invariant measures
for the anisotropic system without exponential mixing.

\subsection{Deterministic invariant measures}

We contrast the invariant measures of the deterministic equations
(\(\varepsilon=0\)) for the fully dissipative and the horizontally dissipative
cases.

\begin{proposition}[Deterministic invariant measures]\label{prop:det_inv}
 For the Navier-Stokes equations with \emph{full dissipation}
    \(-\Delta\) on \(\mathbb{T}^{2}\), the unique invariant probability measure
    is \(\delta_{0}\).
 For the equations with \emph{horizontal dissipation only}
    \(-\partial_{1}^{2}\), every Dirac measure \(\delta_{u}\) with
    \(u\in\mathcal{N}=\ker A\) is invariant; in particular there exist
    infinitely many distinct invariant measures.
\end{proposition}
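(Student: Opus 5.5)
The proof splits into the two claims; both rest on energy estimates for the deterministic flow $S_t$ (the solution map with $\varepsilon=0$) on $H$. An invariant probability measure $\mu$ means $\int \phi(S_t u)\,\mu(du)=\int\phi\,d\mu$ for all bounded continuous $\phi$ and all $t\ge0$, i.e.\ $(S_t)_\#\mu=\mu$.

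\emph{Full dissipation.} First, $\delta_0$ is invariant because $0$ is a steady state. For uniqueness I will use the energy identity
\[
\frac12\frac{d}{dt}\|u\|_{L^2}^2+\|\nabla u\|_{L^2}^2=0 .
\]
The nonlinear term drops out because $\langle (u\cdot\nabla)u,u\rangle=0$. The zero-mean condition in $H$ gives the Poincar\'e inequality $\|u\|\le\|\nabla u\|$ on $\mathbb T^2$, so $\|S_tu\|\le e^{-t}\|u\|$ for every $u\in H$. Now let $\mu$ be invariant and take $\phi(u)=\min(\|u\|,1)$, which is bounded and continuous. Then
\[
\int\phi\,d\mu=\int \phi(S_tu)\,\mu(du)\le\int\min(e^{-t}\|u\|,1)\,\mu(du)\to0
\]
by dominated convergence. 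Hence $\mu(\{u\neq0\})=0$, i.e.\ $\mu=\delta_0$. Equivalently, $(S_t)_\#\mu\to\delta_0$ weakly and $(S_t)_\#\mu=\mu$.

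\emph{Horizontal dissipation only.} For $u\in\mathcal N$, Lemma~\ref{lem:kernel} gives $Au=0$ and $B(u)=0$. Hence the constant path $u(t)\equiv u$ solves \eqref{eq:NSintro} with zero pressure. By the uniqueness of solutions in $\widetilde H^{0,1}$ from \cite{LZZ}, for initial data in $\mathcal N\cap\widetilde H^{0,1}$ we get $S_tu=u$. Then $(S_t)_\#\delta_u=\delta_{S_tu}=\delta_u$, so $\delta_u$ is invariant.

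The main obstacle is that elements of $\mathcal N$ are only $L^2$: $f\in L^2(\mathbb T)$, while well-posedness is stated in $\widetilde H^{0,1}$. I would handle this in one of two ways. The first option is to restrict to $u=(f(x_2),0)$ with $f\in H^1(\mathbb T)$ of mean zero, which is still an infinite-dimensional family. The second is to note that for general $f$ the constant path is a weak solution satisfying the energy equality, and a weak–strong uniqueness argument is not needed if the flow is simply defined on $\mathcal N$ by this steady solution.

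Distinctness and infiniteness are immediate. The functions $e_k=\pi^{-1}(\sin kx_2,0)$, $k\ge1$, are distinct, lie in $\widetilde H^{0,1}$, and yield distinct Dirac measures $\delta_{e_k}$. Any convex combination of these is also invariant.
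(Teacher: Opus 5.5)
Your proof is correct and follows the paper's argument. In the fully dissipative case you use the energy identity and Poincar\'e to get exponential decay, which forces $\mu=\delta_0$; you use the single test function $\min(\|u\|,1)$ where the paper passes to the limit with an arbitrary bounded continuous $f$. In the anisotropic case you show kernel elements are stationary via $Au=0$ and $B(u)=0$, as the paper does, and your extra care about uniqueness in $\widetilde H^{0,1}$ and about kernel elements that are merely $L^2$ covers a point the paper passes over silently.
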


\begin{proof}
  \textbf{Full dissipation.}  The uniqueness of the invariant measure for the
  fully dissipative 2D Navier-Stokes equations on the torus is a classical
  result (see, e.g., \cite{DPZ,LR} and the references therein). We include a
  short proof here merely for comparison with the anisotropic case. Let \(u(t;u_{0})\) be the solution of
  the fully dissipative 2D Navier-Stokes equations with initial data \(u_{0}\).  Taking the \(L^{2}\) inner
  product with \(u\) and using the orthogonality of the nonlinear term yields
  \[
    \frac12\frac{d}{dt}\|u\|_{L^{2}}^{2} + \|\nabla u\|_{L^{2}}^{2}=0 .
  \]
  By the Poincar\'e inequality on the torus,
  there exists \(c>0\) such that \(\|\nabla u\|_{L^{2}}^{2}\ge c\|u\|_{L^{2}}^{2}\).
  Hence
  \[
    \frac{d}{dt}\|u\|_{L^{2}}^{2} + 2c\|u\|_{L^{2}}^{2}\le 0,
  \]
  and Gronwall's lemma gives \(\|u(t)\|_{L^{2}}\le e^{-ct}\|u_{0}\|_{L^{2}}\).
  Consequently every solution converges to zero exponentially fast.
  If \(\mu\) is any invariant probability measure, then for every bounded
  continuous function \(f\) and any \(t>0\)
  \[
    \int_{H} f(u_{0})\,\mu(du_{0})
    = \int_{H} P_{t}f(u_{0})\,\mu(du_{0})
    = \int_{H} \mathbb{E}[f(u(t;u_{0}))]\,\mu(du_{0}) .
  \]
  Letting \(t\to\infty\) and using dominated convergence gives
  \[
    \int_{H} f(u_{0})\,\mu(du_{0}) = f(0),
  \]
  so \(\mu=\delta_{0}\).  The Dirac measure at zero is clearly invariant
  because \(u(t;0)\equiv0\).  Hence \(\delta_{0}\) is the unique invariant
  measure.

  \textbf{Horizontal dissipation only.}  By Lemma~\ref{lem:kernel}, the kernel
  \(\mathcal{N}\) of \(A=-\mathbb{P}\partial_{1}^{2}\) consists of all fields of
  the form \(u_{f}(x_{1},x_{2})=(f(x_{2}),0)\) with \(\int_{\mathbb{T}}f=0\).
  Moreover, for any \(u\in\mathcal{N}\) we have \(B(u)=0\) and \(Au=0\), thus
  \(u(t;u)\equiv u\) is a stationary solution of the deterministic equation
  \eqref{eq:NSintro}.  For each such \(u\) the Dirac measure \(\delta_{u}\) satisfies
  \[
    \int_{H} \mathbb{E}[f(u(t;v))]\,\delta_{u}(dv)
    = f(u(t;u)) = f(u) = \int_{H} f(v)\,\delta_{u}(dv)
  \]
  for all \(t\ge0\) and all bounded measurable \(f\); therefore \(\delta_{u}\)
  is an invariant measure.  Since \(\mathcal{N}\) is an infinite-dimensional
  subspace, there are infinitely many distinct Dirac measures
  \(\{\delta_{u}\}_{u\in\mathcal{N}}\).
\end{proof}

\subsection{A degenerate noise: necessary condition}
\label{sec:degenerate}

We examine a particular additive noise whose range is contained entirely in the kernel
$\mathcal N$.  This example illustrates sharply
the obstacles that the anisotropic dissipation poses to the existence of invariant
measures, and it simultaneously highlights the fundamental role played by the
nonlinear term as the only possible channel of energy exchange between $\mathcal N$
and its orthogonal complement $\mathcal N^\perp$.

\subsubsection*{Construction of the degenerate noise}
Let $\{e_k\}_{k\ge 1}$ be the orthonormal basis of $\mathcal N$ introduced in
Lemma~\ref{lem:kernel}.  Choose a sequence $\{\sigma_k\}_{k\ge 1}\subset(0,\infty)$
with $\sum_{k=1}^{\infty}\sigma_k^2<\infty$, and define a Hilbert--Schmidt operator
$Q^{1/2}:H\to H$ by
\[
  Q^{1/2} u = \sum_{k=1}^{\infty} \sigma_k \langle u, e_k\rangle\, e_k .
\]
Then $\operatorname{Ran}(Q^{1/2})\subset\mathcal N$,
$\operatorname{Tr}_{\mathcal N}(Q)=\sum_{k=1}^{\infty}\sigma_k^2>0$, and
$Q^{1/2}:H\to\widetilde H^{1,1}$ is Hilbert--Schmidt because each $e_k$ lies in
$\widetilde H^{1,1}$.  The noise $\sqrt\varepsilon\,Q^{1/2}W(t)$ therefore acts
only within the vertical shear modes.

\begin{theorem}[Condition on invariant measures]\label{th:degen}
  For the noise constructed above, any invariant probability measure $\mu$ of the
  Markov semigroup associated with \eqref{eq:SNSintro} must satisfy
  $\mu(\mathcal N)=0$.
\end{theorem}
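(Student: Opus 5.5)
The plan is to exploit two facts. First, the kernel $\mathcal N$ is invariant under the stochastic flow when the noise takes values in $\mathcal N$. Second, on $\mathcal N$ the dynamics reduces to a pure, undamped Wiener process, and such a process admits no invariant probability measure.

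\textbf{Step 1 (explicit dynamics on $\mathcal N$).} Fix $x\in\mathcal N\cap\widetilde H^{0,1}$ and set $\bar u(t)=x+\sqrt\varepsilon\,Q^{1/2}W(t)$. Since $\operatorname{Ran}(Q^{1/2})\subset\mathcal N$, the process $\bar u(t)$ lies in $\mathcal N$ for all $t$. By Lemma~\ref{lem:kernel}, $A\bar u=0$ and $B(\bar u)=0$, so $\bar u$ solves \eqref{eq:SNSintro}. Moreover $\bar u$ has the regularity required by the solution framework of Section~\ref{sec:map}, because $Q^{1/2}W\in C([0,T];\widetilde H^{1,1})$. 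By uniqueness (the Lipschitz estimate of Proposition~\ref{prop:Lip} applied to $\mathcal F_x(z^\varepsilon)$), $u^\varepsilon(t;x)=\bar u(t)$. Hence $P_t\mathbbm 1_{\mathcal N}(x)=1$ for $x\in\mathcal N$, that is, $P_t\mathbbm 1_{\mathcal N}\ge \mathbbm 1_{\mathcal N}$. I expect this identification, namely that the unique solution in the paper's class coincides with the explicit process, to be the only delicate point. It requires checking that $\bar u$ belongs to the uniqueness class of \cite{LZZ}, which I would do through the decomposition $u=z^\varepsilon+v$ with $v\equiv x$ solving \eqref{eq:aux_v}.

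\textbf{Step 2 (restriction of $\mu$ to $\mathcal N$ is invariant).} Suppose $\mu$ is invariant and $\mu(\mathcal N)>0$. Invariance gives $\int (P_t\mathbbm 1_{\mathcal N}-\mathbbm 1_{\mathcal N})\,d\mu=0$, and the integrand is nonnegative, so $P_t\mathbbm 1_{\mathcal N}=\mathbbm 1_{\mathcal N}$ $\mu$-a.e. Fix a Borel set $\Gamma$. For $y\notin\mathcal N$ (outside a $\mu$-null set) this yields $P_t\mathbbm 1_{\Gamma\cap\mathcal N}(y)\le P_t\mathbbm 1_{\mathcal N}(y)=0$. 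For $y\in\mathcal N$, Step~1 gives $P_t\mathbbm 1_{\Gamma\cap\mathcal N}(y)=P_t\mathbbm 1_\Gamma(y)$. Therefore
\[
\mu(\Gamma\cap\mathcal N)=\int P_t\mathbbm 1_{\Gamma\cap\mathcal N}\,d\mu=\int_{\mathcal N}P_t\mathbbm 1_\Gamma\,d\mu .
\]
So $\mu_{\mathcal N}:=\mu(\cdot\cap\mathcal N)/\mu(\mathcal N)$ is an invariant probability measure concentrated on $\mathcal N$. It is consequently invariant for the Gaussian translation semigroup $x\mapsto x+\sqrt\varepsilon\,Q^{1/2}W(t)$ on $\mathcal N$.

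\textbf{Step 3 (no invariant measure for the undamped Wiener process).} Let $\hat\mu_{\mathcal N}(\xi)=\int e^{i\langle \xi,x\rangle}\mu_{\mathcal N}(dx)$ for $\xi\in\mathcal N$. Independence of $x$ and $W(t)$ together with invariance gives
\[
\hat\mu_{\mathcal N}(\xi)=\hat\mu_{\mathcal N}(\xi)\,e^{-\varepsilon t\langle Q\xi,\xi\rangle/2}\qquad\text{for all } t\ge0 .
\]
Since every $\sigma_k>0$, we have $\langle Q\xi,\xi\rangle=\sum_k\sigma_k^2\langle\xi,e_k\rangle^2>0$ for $\xi\ne0$. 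Letting $t\to\infty$ forces $\hat\mu_{\mathcal N}(\xi)=0$ for all $\xi\neq0$. This contradicts continuity of $\hat\mu_{\mathcal N}$ at $0$, where $\hat\mu_{\mathcal N}(0)=1$; for instance, take $\xi=se_1$ with $s\to0$. Hence $\mu(\mathcal N)=0$.

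An alternative to Step~3 is to use $\mathbb E\|\bar u(t)\|^2=\|x\|^2+\varepsilon t\operatorname{Tr}Q\to\infty$ combined with a tightness argument. The Fourier argument is cleaner, however, since it needs no moment assumption on $\mu$.
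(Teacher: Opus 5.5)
Your proof is correct and follows the same three-step structure as the paper. First, the explicit solution $x+\sqrt\varepsilon\,Q^{1/2}W(t)$ makes $\mathcal N$ invariant; second, the conditioned measure $\mu_{\mathcal N}$ is then invariant (your route via $P_t\mathbbm{1}_{\mathcal N}=\mathbbm{1}_{\mathcal N}$ $\mu$-a.e.\ is equivalent to the paper's trick of adding the inequalities for $A$ and $\mathcal N\setminus A$); third, the Gaussian translation semigroup on $\mathcal N$ has no invariant probability measure. Your characteristic-functional argument actually proves that last fact, which the paper only cites as well known, and it needs only the single direction $e_1$ with $\sigma_1>0$; likewise, your identification of the explicit process with $\mathcal F_x(z^\varepsilon)$ via $v\equiv x$ fills in the point the paper dismisses with ``one readily checks''.
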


\begin{proof}
  \textbf{Step~1.  Exact solution in the kernel.}
  Let $x\in\mathcal N$.  By Lemma~\ref{lem:kernel}, $A x=0$ and $B(x)=0$, and
  one readily checks that the process
  \[
    u^\varepsilon(t) = x + \sqrt\varepsilon\, Q^{1/2} W(t),\qquad t\ge 0,
  \]
  solves \eqref{eq:SNSintro}.  It remains in $\mathcal N$ for all times and satisfies
  \begin{equation}\label{eq:energy_div}
    \mathbb E\|u^\varepsilon(t)\|_{L^2}^2
    = \|x\|_{L^2}^2 + \varepsilon t\,\operatorname{Tr}_{\mathcal N}(Q)
    \xrightarrow{t\to\infty}\infty .
  \end{equation}

  \textbf{Step~2.  Assume an invariant measure charges $\mathcal N$.}
  Suppose that an invariant probability measure $\mu$ exists with
  $\mu(\mathcal N)>0$.  Define the conditioned probability measure on $\mathcal N$
  by
  \[
    \mu_{\mathcal N}(A) = \frac{\mu(A)}{\mu(\mathcal N)},\qquad
    A\subset\mathcal N\text{ Borel}.
  \]
  Because $\mathcal N$ is a closed invariant subspace
  ($P_t^\varepsilon(x,\mathcal N)=1$ for all $x\in\mathcal N$), the Markov
  semigroup restricted to $\mathcal N$ is well defined.

  Using the invariance of $\mu$, for any $A\subset\mathcal N$ we have
  \begin{align}\label{S5:inv}
    \mu(A) &= \int_{\mathcal N} P_t^\varepsilon(x,A)\,\mu(dx)
            + \int_{H\setminus\mathcal N} P_t^\varepsilon(x,A)\,\mu(dx)\notag \\
           &\ge \int_{\mathcal N} P_t^\varepsilon(x,A)\,\mu(dx).
  \end{align}
  The same inequality holds with $A$ replaced by $\mathcal N\setminus A$.
  Adding the two inequalities and using
  $P_t^\varepsilon(x,A)+P_t^\varepsilon(x,\mathcal N\setminus A)
  =P_t^\varepsilon(x,\mathcal N)=1$, for $x\in\mathcal N$, we have
  \[
    \mu(\mathcal N)
    = \mu(A)+\mu(\mathcal N\setminus A)
    \ge \int_{\mathcal N} 1\,\mu(dx) = \mu(\mathcal N).
  \]
  Hence the inequality in \eqref{S5:inv} must actually be an equality, and consequently
  \[
    \mu(A) = \int_{\mathcal N} P_t^\varepsilon(x,A)\,\mu(dx), \qquad
    \forall A\subset\mathcal N,\; \forall t\ge 0.
  \]
  Dividing by $\mu(\mathcal N)$ we obtain
  \[
    \mu_{\mathcal N}(A) = \int_{\mathcal N} P_t^\varepsilon(x,A)\,\mu_{\mathcal N}(dx),
  \]
  i.e.\ $\mu_{\mathcal N}$ would be an invariant probability measure for the
  semigroup restricted to $\mathcal N$.

  \textbf{Step~3.  Contradiction.}
  However, on $\mathcal N$ the transition probabilities are those of the
  non-degenerate Gaussian semigroup
  $u^\varepsilon(t)=x+\sqrt\varepsilon\,Q^{1/2}W(t)$ with covariance operator
  $\varepsilon t\,Q\big|_{\mathcal N}$.  As is well known, a Gaussian semigroup with linearly
  growing variance on an infinite-dimensional space admits no invariant
  probability measure.  Thus such a $\mu_{\mathcal N}$ cannot exist, contradicting
  the assumption $\mu(\mathcal N)>0$.  Therefore any invariant measure must
  satisfy $\mu(\mathcal N)=0$.
\end{proof}

The degenerate noise constructed above shows that any invariant measure of the
anisotropic Navier-Stokes equations on the torus must be orthogonal to the kernel
$\mathcal N$.  The noise injects energy exclusively into the  vertical
shear modes of $\mathcal N$, whose energy grows linearly in time.  The horizontal
dissipation acts only on $\mathcal N^\perp$, where it would rapidly damp any
incoming excitation.  The nonlinear term $B(u)$ is the sole mechanism that can
transfer energy from $\mathcal N$ into the dissipative region $\mathcal N^\perp$.
If this transfer were efficient enough, it could drain the energy injected into
$\mathcal N$ and deliver it to $\mathcal N^\perp$, where it would be dissipated,
thereby allowing a global statistical balance that gives no mass to $\mathcal N$.

However, in the anisotropic Navier-Stokes equations the energy cascade is
severely constrained by the complete lack of vertical dissipation, and
quantitative estimates on the efficiency of the vertical-to-horizontal transfer
are presently out of reach.  Consequently, the question of whether an invariant
measure actually exists for this degenerate noise-and more generally for the
anisotropic Navier-Stokes equations with additive noise on the torus-is
intimately tied to the analytical difficulty of controlling the
energy exchange between the undamped vertical directions and the dissipative
horizontal directions.

\section{Conclusion and outlook}\label{sec:conclusion}

In this paper, we have established a uniform large deviation principle for
solution paths of the 2D anisotropic Navier--Stokes equations on the torus,
proved the failure of exponential mixing, and obtained necessary conditions
for the existence of invariant measures.

The analysis of the 2D Stokes operator with only horizontal dissipation
provides a clear conceptual framework for understanding the 3D horizontally
dissipative Navier-Stokes equations. The kernel of the dissipation operator
is the fundamental structure that obstructs dissipation, exponential mixing
and statistical balance. Boundary conditions determine the qualitative
behaviour of the system by either eliminating or preserving this kernel.
This perspective extends naturally to three dimensions, where the kernel
again consists of purely vertical shear flows that are immune to horizontal
viscosity.  On periodic domains, the infinite-dimensional kernel  is preserved,
making the long-time dynamics particularly challenging, whereas
no-slip lateral boundaries eliminate the kernel and may restore exponential
mixing.

The pathwise LDP established here does not rely on any long-time statistical
properties of the system.  Extending this approach to three dimensions would require
overcoming the well-known difficulties of global well-posedness and nonlinear
estimates for the 3D anisotropic Navier-Stokes equations, which currently
remain open.  We hope that the structural insights gained in this work will
prove useful for future studies of partially dissipative fluid models,
including the anisotropic MHD and primitive equations.

\section*{Acknowledgments}
The work is supported by the National Natural Science Foundation of China (Nos. 12271293  and  11701269), Natural Science Foundation of Jiangsu Province (No.  BK20231301),  Natural Science Foundation of Shandong Province ( No. ZR2023MA002), the project of Youth Innovation Team of Universities of Shandong Province (No. 2023KJ204). The authors are grateful to the editors and referees for their
very valuable suggestions and comments.

\section*{Data Availability}
There is no data associated with this publication.

\section*{Confict of Interest}
The authors declare no conflict of interest.

\end{document}